\newtheorem{thm}{Theorem}[section]
\newtheorem{lemma}[thm]{Lemma}
\newtheorem{corollary}[thm]{Corollary}
\newtheorem{proposition}[thm]{Proposition}
\newtheorem*{thm*}{Theorem}
\theoremstyle{definition}
\newtheorem{remark}[thm]{Remark}
\newcommand{\ph}{\varphi}
\newcommand{\ma}{\mathcal}
\newcommand{\la}{\longrightarrow}
\newcommand{\ol}{\mathcal{O}}
\newcommand{\pr}{\mathbb{P}}
\newcommand{\R}{\mathbb{R}}
\newcommand{\Z}{\mathbb{Z}}
\newcommand{\N}{\mathcal{N}_1}
\newcommand{\Nu}{\mathcal{N}^1}
\newcommand{\depth}{\operatorname{depth}}
\newcommand{\NE}{\operatorname{NE}}
\newcommand{\Exc}{\operatorname{Exc}}
\newcommand{\codim}{\operatorname{codim}}
\newcommand{\Eff}{\operatorname{Eff}}
\newcommand{\mov}{\operatorname{mov}}
\patchcmd{\section}{\normalfont}{\normalfont\large}{}{}
\patchcmd{\subsection}{\bfseries}{\scshape\centering}{}{}
\patchcmd{\subsection}{-.5em}{.5em}{}{}
\title[Fano $4$-folds with $b_2>12$ are products]{Fano $4$-folds with $b_2>12$ are products of surfaces}
\author{C.~Casagrande}
\address{Universit\`a di Torino,
Dipartimento di Matematica,
via Carlo Alberto 10,
10123 Torino - Italy}
\email{cinzia.casagrande@unito.it}
\date{November 1, 2023}
\subjclass[2020]{14J45,14J35,14E30}
\begin{document}
\maketitle

{\hfill\em\small Dedicated to Lorenzo, Sabrina, and Fabrizio}

\bigskip

\begin{abstract}
Let $X$ be a smooth, complex Fano $4$-fold, and $\rho_X$ its Picard number. We show that if $\rho_X>12$, then $X$ is a product of del Pezzo surfaces. The proof relies on a careful study of divisorial elementary contractions $f\colon X\to Y$ such that $\dim f(\Exc(f))=2$, together with the author's previous work  on Fano $4$-folds. In particular, given $f\colon X\to Y$ as above, under suitable assumptions we show  that $S:=f(\Exc(f))$ is a smooth del Pezzo surface with $-K_S=(-K_Y)_{|S}$.
\end{abstract}
\section{Introduction}
\noindent Smooth, complex Fano varieties 
have been classically intensively studied, and have attracted a lot of attention also in the last decades, due to their role in the framework of the Minimal Model Program. The Fano condition is a natural positivity condition of the tangent bundle, and it ensures a rich geometry, from both the points of view of birational geometry and of families of rational curves.

It has been known since the 90's that Fano varieties form a bounded family in each dimension. Del Pezzo surfaces are known classically, and the classification of Fano $3$-folds have been in achieved in the 80's, there are 105 families.

Starting from dimension $4$, there are probably too many families to get a complete classification; still  we aim to better understand and describe the behavior and properties of these varieties.
In this paper we focus on Fano $4$-folds $X$ with ``large'' Picard number $\rho_X$; let us recall that since $X$ is Fano, $\rho_X$ is equal to the second Betti number $b_2(X)$.
We show the following  result.
 \begin{thm}\label{main}
   Let $X$ be a smooth Fano $4$-fold with $\rho_X> 12$. Then $X\cong S_1\times S_2$, where $S_i$ are del Pezzo surfaces.
\end{thm}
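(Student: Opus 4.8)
\emph{Plan of the proof.}

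\textbf{Reduction via the Lefschetz defect.} Recall the Lefschetz defect $\delta_X$ of $X$: the maximum, over prime divisors $D\subset X$, of $\rho_X-\dim\mathcal N_1(D,X)$, where $\mathcal N_1(D,X)$ is the image of $N_1(D)$ in $N_1(X)$. By the author's earlier work on the Lefschetz defect of Fano varieties, a Fano $4$-fold with $\delta_X\geq 4$ is a product $S_1\times S_2$ of del Pezzo surfaces; conversely a product $S_1\times S_2$ with $\rho_X=\rho_{S_1}+\rho_{S_2}>12$ satisfies $\max\{\rho_{S_1},\rho_{S_2}\}\geq 7$, hence $\delta_X\geq\max\{\rho_{S_1},\rho_{S_2}\}-1\geq 6$. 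Therefore the case $\delta_X\geq 4$ already gives precisely the products in the statement, and it suffices to prove that \emph{a Fano $4$-fold $X$ with $\delta_X\leq 3$ has $\rho_X\leq 12$}. So assume $\delta_X\leq 3$ and, for contradiction, $\rho_X\geq 13$.

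\textbf{Forcing a divisorial contraction to a surface.} The goal now is to show that such an $X$ must carry a divisorial elementary contraction $f\colon X\to Y$ with $\dim f(\Exc(f))=2$ for which the hypotheses of the technical theorem recalled in the abstract hold. Every competing possibility is excluded using the classification of elementary contractions of a Fano $4$-fold together with the author's previous structural results: a fiber-type elementary contraction is incompatible with $\rho_X\geq 13$ and $\delta_X\leq 3$ (onto a surface or a curve this is immediate from $\rho_{\mathrm{target}}=\rho_X-1$ and the defect bound; onto a $3$-fold it is controlled by the earlier study of conic-bundle-type contractions of Fano $4$-folds, which otherwise produces a product); a birational elementary contraction that is the blow-up of a point or of a smooth curve, or that sends a divisor onto a curve, again caps $\rho_X$ below $13$; and small elementary contractions are dealt with via the Minimal Model Program, reducing to the divisorial case. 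Verifying the ``suitable assumptions'' of the technical theorem in the surviving situation is part of this step.

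\textbf{Exploiting the del Pezzo surface.} Apply the technical theorem: $S:=f(\Exc(f))$ is a smooth del Pezzo surface with $-K_S=(-K_Y)_{|S}$; by adjunction this means $\det\mathcal N_{S/Y}\cong\mathcal O_S$, so $S$ sits in $Y$ with the numerical behaviour of a fiber of a fibration, and in particular $\rho_S\leq 9$. Combining this with $\rho_Y=\rho_X-1\geq 12$, with the structure of $E:=\Exc(f)$ as a family of rational curves over $S$ (so that $\dim\mathcal N_1(E,X)$ is controlled by $\rho_S$ and the reducible fibers while $\rho_X-\dim\mathcal N_1(E,X)\leq\delta_X\leq 3$), and with the fact that $-K_S=(-K_Y)_{|S}$ lets the Fano-type positivity of $X$ pass to $Y$ and to its further contractions and flips, one either extracts a genuine fibration of $X$ whose general fiber is governed by $S$ — bounding $\rho_X$ in terms of $\rho_S\leq 9$ and $\delta_X\leq 3$ — or iterates the whole argument on $Y$, lowering $\rho$ by one each time while keeping $\delta\leq 3$, down to Fano $4$-folds of small Picard number where the known classification is contradicted. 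A careful bookkeeping of the contributions yields $\rho_X\leq 12$, the desired contradiction.

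\textbf{The main obstacle.} I expect the real difficulties to be two. First, \emph{producing} the divisorial contraction to a surface with all the hypotheses demanded by the technical theorem: this needs a sharp and exhaustive analysis of the competing contraction types and of how $\rho$ and $\delta$ behave under small birational modifications, and is where most of the ``previous work on Fano $4$-folds'' is spent. Second, \emph{closing the bound}: after replacing $X$ by $Y$ one must check that $Y$ — with its singularities, with $-K_Y$ still sufficiently positive, and with $\delta\leq 3$ — is again within scope, and that the invariants decrease so that the final count is sharp enough to reach exactly $12$ rather than a weaker bound. The equality $-K_S=(-K_Y)_{|S}$ coming from the technical theorem is exactly the lever that makes this propagation work, so the crux is to use it cleanly at each stage.
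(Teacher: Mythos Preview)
Your outline follows the paper's broad architecture (reduce via $\delta_X$, then study elementary contractions), but there are three concrete gaps.

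\textbf{The case $\delta_X=3$ must be split off.} You work throughout with $\delta_X\leq 3$, but the final numerical bound needs $\delta_X\leq 2$: once $S$ is del Pezzo you get $\dim\N(E,X)\leq\rho_S+1\leq 10$, and then $\rho_X\leq\dim\N(E,X)+\delta_X$. With $\delta_X\leq 3$ this only gives $\rho_X\leq 13$. The paper disposes of $\delta_X=3$ separately via an existing classification (these $4$-folds have $\rho_X\leq 8$), and then runs the argument under $\delta_X\leq 2$.

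\textbf{Small contractions are not ``reduced to the divisorial case''.} In the paper this is an independent, hard input: a previously proved theorem that a Fano $4$-fold with a small elementary contraction has $\rho_X\leq 12$. There is no MMP reduction to divisorial contractions here; your sketch glosses over what is in fact one of the two main pillars.

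\textbf{The closing step is much simpler than you propose, and your version would not work.} Once the technical theorem gives that $S$ is smooth del Pezzo, the paper finishes in one line as above: $\rho_S\leq 9\Rightarrow\dim\N(E,X)\leq 10\Rightarrow\rho_X\leq 12$. There is no iteration on $Y$ and no extraction of a fibration. Your iteration plan is problematic because $Y$ is only locally factorial with terminal singularities, so you would have to redo the entire machinery in the singular category; and ``$\det\mathcal N_{S/Y}\cong\ol_S$'' does not by itself produce a fibration. What you are missing, conversely, is the step that \emph{verifies} the hypotheses of the technical theorem: one must exhibit three extremal rays $R_1,R_2,R_3$ of type $(3,2)$ with $E\cdot R_i=0$, $E\cap E_{R_i}\neq\emptyset$, and $E_{R_1}\cdot R_j>0$ for $j=2,3$. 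The paper does this by a convex-geometry argument on $\NE(Y)$ (every ray adjacent to $\NE(f)$ satisfies $E\cdot R'=0$; pushing to $Y$, one shows via a facet/span lemma that some ray $R_1$ of $\NE(Y)$ has $E_{R_1}$-positive intersection with at least two other rays). This is the actual content of ``suitable assumptions'', and your plan does not address it.
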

To the author's knowledge, all known examples of Fano $4$-folds which are not products of surfaces have $\rho\leq 9$, so that we do not know whether the condition 
$\rho> 12$ in Theorem \ref{main} is sharp. We refer the  reader to \cite[\S 6]{rendiconti} for an overview of known Fano $4$-folds with $\rho\geq 6$; there are few examples and it is an interesting problem to construct new ones.

As $\rho_{S_1\times S_2}=\rho_{S_1}+\rho_{S_2}$, and del Pezzo surfaces have $\rho\leq 9$, Theorem \ref{main} implies the following.
\begin{corollary}\label{max}
Let $X$ be a smooth Fano $4$-fold. Then $\rho_X\leq 18$.
\end{corollary}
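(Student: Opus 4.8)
The plan is to obtain Corollary \ref{max} as an immediate consequence of Theorem \ref{main}, splitting into two cases according to the size of $\rho_X$. If $\rho_X\le 12$ there is nothing to prove. If instead $\rho_X>12$, then Theorem \ref{main} applies and yields an isomorphism $X\cong S_1\times S_2$ with $S_1,S_2$ del Pezzo surfaces, so it suffices to bound $\rho_{S_1\times S_2}$.

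For this I would first record the additivity of the Picard number for such a product. Since del Pezzo surfaces are rational, one has $H^1(S_i,\ol_{S_i})=H^0(S_i,\Omega^1_{S_i})=0$, so the K\"unneth formula gives $b_2(S_1\times S_2)=b_2(S_1)+b_2(S_2)$; as $X$ and the $S_i$ are Fano (hence $b_1=0$ and $\rho=b_2$), this reads $\rho_X=\rho_{S_1}+\rho_{S_2}$. Alternatively, one can argue directly that $\Pic(S_1\times S_2)\cong\Pic(S_1)\oplus\Pic(S_2)$.

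Finally I would bound each factor by means of the classical classification of del Pezzo surfaces: each $S_i$ is isomorphic to $\pr^2$, to $\pr^1\times\pr^1$, or to the blow-up $\Bl_k\pr^2$ of $\pr^2$ at $k$ points in general position with $1\le k\le 8$; in every case $\rho_{S_i}\le 9$, with equality precisely for $\Bl_8\pr^2$. Hence $\rho_X=\rho_{S_1}+\rho_{S_2}\le 9+9=18$.

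The point of this outline is that there is no genuine obstacle: all the content is in Theorem \ref{main}, proved in the body of the paper, and the corollary is a two-line deduction once the product structure is available. The only thing worth keeping in mind is that the two cases are of completely different nature --- for $\rho_X\le 12$ the bound is vacuous, while for $\rho_X>12$ it is the mere existence of a product decomposition, and not any sharper input, that forces $\rho_X\le 18$; in particular the corollary says nothing about which of the values $10,11,12$ are actually attained.
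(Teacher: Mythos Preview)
Your proposal is correct and matches the paper's own argument exactly: the paper deduces the corollary in the single sentence preceding its statement, using $\rho_{S_1\times S_2}=\rho_{S_1}+\rho_{S_2}$ and $\rho_{S_i}\leq 9$ for del Pezzo surfaces. Your added justifications (K\"unneth, the classification of del Pezzo surfaces) simply spell out what the paper takes for granted.
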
  
Let us note that Theorem \ref{main} and Corollary \ref{max} generalize to dimension $4$ the analogous result for Fano $3$-folds, established by Mori and Mukai in the 80's:
\begin{thm}[\cite{morimukai2}, Theorem 1.2]
  Let $X$ be a smooth Fano 3-fold with $\rho_X> 5$. Then $X\cong S\times\pr^1$ where $S$ is a del Pezzo surface. In particular $\rho_X\leq 10$.
\end{thm}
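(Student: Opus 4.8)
The plan is to analyze the elementary (extremal) contractions of $X$, using Mori's classification of extremal rays on smooth projective $3$-folds: every extremal contraction of a smooth $3$-fold is either divisorial or of fiber type --- in particular there are no small extremal contractions. Since any extremal contraction $\varphi\colon X\to Z$ satisfies $\rho(X/Z)=1$, a fiber-type contraction with $\dim Z\le 1$ would force $\rho_X\le 2$; so under the hypothesis $\rho_X>5$ the only fiber-type contractions available are conic bundles $\pi\colon X\to S$ onto a smooth surface, and then $\rho_S=\rho_X-1\ge 5$.

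\emph{Step 1: produce a conic bundle.} If $X$ had no fiber-type contraction, all its extremal contractions would be divisorial; contracting one --- in the favourable case a blow-down of a smooth curve or point --- produces a $3$-fold $X'$ with $\rho_{X'}=\rho_X-1$, and iterating this descent one is led, after finitely many steps, to a variety carrying a conic bundle, with a bounded set of residual configurations handled via the classification of Fano $3$-folds of small Picard number. Keeping track of which intermediate varieties stay smooth and Fano, and which singular cases (a divisor contracted to a point) occur, is the technical heart of the matter and is where Mori--Mukai's detailed analysis of the Mori cone enters.

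\emph{Step 2: the base is del Pezzo with no singular fibers.} For a conic bundle $\pi\colon X\to S$, the adjunction/discriminant formula for conic bundles together with ampleness of $-K_X$ forces $-K_S$ to be ample; the same formula bounds the discriminant curve $\Delta\subset S$ in terms of $-K_S$, and since $\rho_S\ge 5$ gives $(-K_S)^2=10-\rho_S\le 5$, it forces $\Delta=\emptyset$. Thus $\pi$ is a $\pr^1$-bundle over a del Pezzo surface with $\rho_S\ge 5$, i.e.\ $S=\Bl_k\pr^2$ with $4\le k\le 8$. (Excluding singular fibers here is the second place where the precise numerical threshold is used.)

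\emph{Step 3: the $\pr^1$-bundle is trivial.} Writing $X=\pr_S(\ma E)$, the Fano condition controls the twist of $\ma E$: for every $(-1)$-curve $E\subset S$ the surface $\pi^{-1}(E)$ is a Hirzebruch surface on which $-K_X$ must be ample, and a direct computation forces $\ma E_{|E}$ to be balanced, hence the class measuring the non-triviality of $\ma E$ meets every $(-1)$-curve in degree $0$; as the $(-1)$-curves span $\Pic(S)$ and the intersection form is nondegenerate, that class vanishes and $X\cong S\times\pr^1$. Then $S$ is del Pezzo and $\rho_X=\rho_S+1\le 10$. The main obstacles are Step 1 --- excluding all divisorial-only configurations of large Picard number --- and the rigidity in Steps 2--3: neither the absence of singular fibers nor the triviality of the bundle is formal, and pinning them down requires the delicate finite analysis of extremal contractions and of conic bundles over the finitely many del Pezzo surfaces with $\rho\ge 5$ that is carried out by Mori--Mukai.
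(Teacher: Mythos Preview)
The paper does not contain a proof of this theorem. It is stated purely as a citation of Mori--Mukai's result \cite{morimukai2}, included only to point out that Theorem~\ref{main} is the $4$-dimensional analogue of a classical fact about Fano $3$-folds. There is therefore no ``paper's own proof'' to compare your proposal against.

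As for the proposal itself: the outline follows the shape of the original Mori--Mukai argument (produce a conic bundle, control the discriminant, show the $\pr^1$-bundle is trivial), and you are candid that Step~1 --- ruling out purely divisorial configurations at high Picard number --- is the real work and is only gestured at here. One caution on Step~2: the slogan ``$\rho_S\ge 5\Rightarrow(-K_S)^2\le 5\Rightarrow\Delta=\emptyset$'' compresses several nontrivial steps; the actual exclusion of singular fibers in Mori--Mukai uses more than the single numerical inequality you wrote, and getting from a bound on $(-K_S)^2$ to $\Delta=\emptyset$ requires the precise discriminant relation together with effectivity constraints that you have not stated. Step~3 is essentially correct in spirit. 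So the sketch is a fair roadmap of the classical proof, but it is a sketch, and since the present paper offers no alternative argument, there is nothing further to compare.
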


\medskip

The proof of Theorem \ref{main} relies on a careful study of {\em elementary contractions of $X$ of type $(3,2)$}, together with the author's previous work on Fano $4$-folds. 
To explain this, let us introduce some notation.

Let $X$ be a Fano $4$-fold. A  {\em contraction}  is a surjective morphism $f\colon X\to Y$, with connected fibers, where $Y$ is normal and projective;
$f$ is  {\em elementary} if $\rho_X-\rho_Y=1$. As usual, an elementary contraction can be of fiber type, divisorial, or small.

We say that an elementary contraction 
$f\colon X\to Y$ is {\em of type}
$(3,2)$ if it is divisorial with $\dim S=2$, where $E:=\Exc(f)$ and $S:=f(E)\subset Y$.
  Such $f$ can have at most finitely many $2$-dimensional fibers; outside the images of these fibers, $Y$ and $S$ are smooth, and $f$ is just the blow-up of the surface $S$.
 If $y_0\in S$ is the image of a two-dimensional fiber, then either $Y$ or $S$ are singular at $y_0$; these singularities have been described by Andreatta and Wi\'sniewski, see Theorem \ref{singularities}. In any case, $Y$ has at most isolated locally factorial and terminal singularities, while $S$ can be not normal.

We denote by $\mathcal{N}_{1}(X)$ the real vector space of one-cycles  with real coefficients, modulo numerical equivalence; we have $\dim\N(X)=\rho_X$. For any closed subset $Z\subset X$, we set
$$\N(Z,X):=\iota_*(\N(Z))\subset\N(X)$$ where $\iota\colon Z\hookrightarrow X$ is the inclusion, so that $\N(Z,X)$ is the subspace of $\N(X)$ spanned by classes of curves in $Z$, and $\dim\N(Z,X)\leq\rho_Z$.

We study an elementary contraction $f\colon X\to Y$ of type $(3,2)$ under the hypothesis that: $$\dim\N(E,X)\geq 4.$$ In particular this implies that $Y$ is Fano too (Lemma \ref{Fanotarget}).

We would like to compare $(-K_Y)_{|S}$ to $-K_S$, but since $S$ may be singular, 
we consider the minimal resolution of singularities $\mu\colon S'\to S$
and set $L:=\mu^*((-K_Y)_{|S})$, a nef and big divisor class on $S'$. We show that $K_{S'}+L$ is semiample (Proposition \ref{resolution}). Then
our strategy is to look for curves in $S'$ on which $K_{S'}+L$ is trivial, using other elementary contractions of $X$ of type $(3,2)$ whose exceptional divisor intersects $E$ in a suitable way.

Hence let us assume that $X$ has another elementary contraction $g_1$ of type $(3,2)$ whose exceptional divisor $E_1$ intersects $E$, and such that  $E\cdot \Gamma_1=0$ for a curve $\Gamma_1$ contracted by $g_1$. Set $D:=f(E_1)\subset Y$. We show that
an irreducible component $C_1$ of
$D\cap S$ is a $(-1)$-curve contained in the smooth locus $S_{\text{\em reg}}$, and such that $-K_Y\cdot C_1=1$ (Proposition \ref{oneray}, see Figure \ref{figura1} on p.~\pageref{figura1}). If $C_1'\subset S'$ is the transform of $C_1$, we have $(K_{S'}+L)\cdot C_1'=0$.

Finally let us assume that $X$ has three elementary contractions $g_1,g_2,g_3$, all of type $(3,2)$, satisfying the same assumptions as $g_1$ above. We also assume that $E_1\cdot\Gamma_2>0$ and $E_1\cdot \Gamma_3>0$, where $E_1=\Exc(g_1)$ and $\Gamma_2,\Gamma_3$ are curves contracted by $g_2,g_3$ respectively. 
Then we show that $S$ is a smooth del Pezzo surface with $-K_S=(-K_Y)_{|S}$ (Propositions \ref{tworays} and \ref{threerays}); let us give an overview of the proof.

The previous construction yields three distinct $(-1)$-curves $C_1',C_2',C_3'\subset S'$ such that $(K_{S'}+L)\cdot C_i'=0$ and $C_1'$ intersects both $C_2'$ and $C_3'$. This shows that the contraction of $S'$ given by $K_{S'}+L$  cannot be birational, namely $K_{S'}+L$ is not big. We also rule out the possibility of a contraction onto a curve, and conclude that $K_{S'}+L\equiv 0$. Finally
 we show that $\omega_S\cong \ol_Y(K_Y)_{|S}$, where $\omega_S$ is the dualizing  sheaf of $S$, and conclude that $S$ is  smooth and del Pezzo.

 We believe that these results can be useful in the study of Fano $4$-folds besides their use in the present work. It would be interesting to generalize this technique to higher dimensions.

\medskip

Let us now explain how we use these results to prove Theorem \ref{main}.
 We define the {\em Lefschetz defect} of $X$ as:
 $$\delta_X:=\max\bigl\{\codim\N(D,X)\,|\,D\subset X\text{ a prime divisor}\bigr\}.$$
 This invariant, introduced in \cite{codim}, measures the difference between the Picard number of $X$ and that of its prime divisors; we refer the  reader to \cite{rendiconti} for a survey on $\delta_X$.

Fano $4$-folds with $\delta_X\geq 3$ are classified, as follows.
\begin{thm}[\cite{codim}, Theorem 3.3]\label{codim}
Let $X$ be a smooth Fano $4$-fold. If $\delta_X\geq 4$, then $X\cong S_1\times S_2$ where $S_i$ are del Pezzo surfaces, and $\delta_X=\max_{i}\rho_{S_i}-1$.
\end{thm}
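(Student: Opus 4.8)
The plan is to use a prime divisor realizing the Lefschetz defect to produce on $X$ a contraction onto a surface with del Pezzo fibers, and then to recognize $X$ as a product. Fix a prime divisor $D\subset X$ with $\codim_{\N(X)}\N(D,X)=\delta_X\ge 4$. Dually, the restriction $\Nu(X)\to\Nu(D)$ has kernel of dimension $\ge 4$, so there is a $(\ge 4)$-dimensional space $W\subset\Nu(X)$ of divisor classes that are numerically trivial on every curve contained in $D$; equivalently $W=\N(D,X)^{\perp}$. Record also that $\rho_X\ge\delta_X+1\ge 5$, and that any extremal ray $R$ of $\NE(X)$ with $D\cdot R<0$ is generated by a curve lying in $D$, hence $R\subset\N(D,X)$.

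The core step — and the hard part — is to convert $W$ into a genuine contraction. Since $X$ is Fano, $\NE(X)$ is rational polyhedral, and I would first pass to a convenient Fano model through a minimal model program guided by $D$ (flips and divisorial contractions of type $(3,2)$), arranging on the new model that $\N(D,X)\cap\NE(X)$ is a face of the Mori cone; equivalently, that $W$ meets $\Nef(X)$ in a face of the full dimension $\delta_X$. This produces a contraction $q\colon X\to Z$ with $\rho_Z=\delta_X$ which contracts precisely the curves whose class lies in $\N(D,X)$; in particular $D$ is vertical for $q$, and indeed $D=q^{-1}(q(D))$. A dimension count — $\dim D=3$, $\dim X=4$, and $\rho_Z\ge 4$ — excludes $\dim Z\in\{0,1\}$ immediately, and, together with the classification of elementary contractions of smooth Fano $4$-folds, excludes $\dim Z\in\{3,4\}$ as well; hence $\dim Z=2$, $Z$ is a smooth del Pezzo surface, the general fiber $F$ of $q$ is a del Pezzo surface of Picard number $\rho_X-\delta_X-1$, the contraction $q$ is equidimensional with reduced fibers outside finitely many points of $Z$, and $D$ is one of those finitely many $3$-dimensional special fibers. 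One checks the last points by the standard argument that a jumping or non-reduced fiber would be swept by rational curves whose classes force $\N$ of the preimage of a curve in $Z$ to be too small, against $\delta_X\le 8$.

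Given $q\colon X\to Z$, I would recover the product. The special fibers of $q$ arise, via the $(3,2)$-contraction on $X$ contracting $D$ and its analogues, by blowing up loci of the form $\{\text{point}\}\times(\text{surface})$; contracting all of them lands on a model $X_1\to Z$ which is an honest del Pezzo fiber bundle. This bundle is isotrivial — its fiber is rigid when its Picard number is small, and in the remaining cases one uses properness, the vanishing $H^i(X,\ol_X)=0$ for $i>0$, and boundedness of del Pezzo surfaces — so, being isotrivial over the simply connected base $Z$ (del Pezzo surfaces are rational, hence simply connected), it is globally trivial: $X_1\cong Z\times F$. As $X_1$ is Fano, $Z$ and $F$ are del Pezzo surfaces, and since blowing up a $\{\text{point}\}\times(\text{surface})$ inside a product of two surfaces again gives a product of two surfaces (the new factor automatically del Pezzo because $X$ is Fano), lifting back along the $(3,2)$-contractions gives $X\cong S_1\times S_2$ with $S_i$ del Pezzo. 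Finally $\delta_X=\max_i\rho_{S_i}-1$ by direct computation: a prime divisor of $S_1\times S_2$ which does not dominate the factor $S_j$ equals $C\times S_j$ for a prime curve $C$ in the other factor, and has $\N(\,\cdot\,,X)$ of codimension $\rho_{S_k}-1$ with $\{j,k\}=\{1,2\}$, while a prime divisor dominating both factors has strictly smaller codimension.

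The main obstacle is the second paragraph: turning the purely linear-algebraic fact about $W$ into a contraction of the stated shape. Producing a nef class in $W$ — equivalently, arranging through the minimal model program that $\N(D,X)\cap\NE(X)$ becomes a face of $\NE(X)$ while retaining enough control to reconstruct the product afterwards — and then pinning down $\dim Z=2$, the del Pezzo nature of $Z$ and of the general fiber, and the structure of the special fibers, all rely on the detailed classification of elementary contractions of smooth Fano $4$-folds and on careful bookkeeping of the Lefschetz defect along the minimal model program. By contrast, trivializing the del Pezzo fiber bundle and carrying out the closing computation of $\delta_X$ are comparatively routine.
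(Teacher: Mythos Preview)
This theorem is not proved in the present paper; it is quoted from \cite{codim} (Th.~3.3 there), so there is no proof here to compare against. What I can do is assess your sketch on its own terms.

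Your overall shape --- produce from $D$ a fibration $q\colon X\to Z$ onto a surface with del Pezzo fibers, then trivialize --- is broadly the right target, and the closing computation of $\delta_{S_1\times S_2}$ is fine. But two of your middle steps are genuine gaps, not just missing details.

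\emph{First gap (which you flag yourself).} Arranging, via an MMP, that $\N(D,X)\cap\NE(X)$ becomes a face of $\NE(X)$ while keeping enough structure to reconstruct $X$ afterwards is the heart of the matter, and nothing in your sketch explains how to do it. Flips and divisorial contractions destroy smoothness and Fanoness of the ambient, change $D$, and can change $\delta$; there is no off-the-shelf MMP that ``straightens'' $\N(D,X)$ into a face. The actual argument in \cite{codim} does not proceed this way: it builds the surface quotient by a careful analysis of covering families of rational curves and of the extremal rays meeting $\N(D,X)$, and works in arbitrary dimension (the $4$-fold statement is a specialization).

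\emph{Second gap (which you do not flag).} Your trivialization step is wrong as stated. Del Pezzo fibrations are \emph{not} isotrivial in general: already smooth families of cubic surfaces (degree $3$ del Pezzos, $\rho=7$) over $\pr^1$ have genuinely varying moduli. The invariants you invoke --- properness, $H^i(X,\ol_X)=0$, boundedness --- do not force constancy of moduli; they are satisfied by non-isotrivial families. So even granting a smooth del Pezzo fibration $X_1\to Z$, you cannot conclude $X_1\cong Z\times F$ by this route. In \cite{codim} the product structure is obtained differently: one first produces a \emph{smooth $\pr^1$-fibration} on $X$ (or on a suitable model) compatible with $q$, and uses it to split off a surface factor directly; isotriviality of a general del Pezzo fibration is never invoked.

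In short: the strategy is in the right neighborhood, but both the construction of $q$ and the passage from $q$ to a product need substantively different arguments than the ones you propose.
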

\begin{thm}[\cite{delta3}, Proposition 1.5]\label{delta3}
  Smooth Fano $4$-folds with $\delta_X=3$ are classified. They have $5\leq\rho_X\leq 8$, and if $\rho_X\in\{7,8\}$ then $X$ is a product of surfaces.
\end{thm}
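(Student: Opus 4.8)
The plan is to fix a prime divisor $D\subset X$ realizing the defect, so that $\codim\N(D,X)=3$ and $\dim\N(D,X)=\rho_X-3$, and to extract from $D$ a fibration, exactly as one does in the proof of the product case Th.~\ref{codim}. The starting point is the structure theory of the Lefschetz defect developed in \cite{codim}: when the defect is realized by $D$, the divisor $D$ is (numerically) a pullback, and the curve classes it contains, together with the classes contracted by the associated extremal face, single out a fiber-type contraction. Concretely, after running a suitable relative MMP I would produce a fibration $\psi\colon X'\to Y$, where $X'$ is obtained from $X$ by a composition of flips (so $\rho_{X'}=\rho_X$) and the general fiber $F$ of $\psi$ is a del Pezzo surface. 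For this fibration one has $\rho_X-\rho_Y=\rho_F$, and $D$ (or its transform on $X'$) is the pullback of a curve $C\subset Y$, so that $\N(D,X)=\langle[\widetilde C]\rangle\oplus\N(F,X')$ has dimension $1+\rho_F$. The condition $\codim\N(D,X)=3$ then reads $\rho_Y-1=3$, i.e.\ the \emph{base} of the defect fibration has Picard number exactly $\rho_Y=\delta_X+1=4$, while the fibers are del Pezzo surfaces with $\rho_F=\rho_X-4$.

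From this the bounds $5\leq\rho_X\leq 8$ follow quickly. Since $\rho_X=\rho_Y+\rho_F=4+\rho_F$ and a del Pezzo surface has $\rho_F\geq 1$, we get $\rho_X\geq 5$. For the upper bound it suffices to bound the fibers: if $\rho_F\geq 5$, then a fiberwise curve sweeps out a prime divisor $D'$ with $\dim\N(D',X)=\rho_Y+1=5$, hence $\codim\N(D',X)=\rho_F-1\geq 4$, contradicting $\delta_X=3$ (and placing $X$ among the products of Th.~\ref{codim}). Therefore $\rho_F\leq 4$ and $\rho_X\leq 8$.

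Next, for $\rho_X\in\{7,8\}$, i.e.\ $\rho_F\in\{3,4\}$, I would show that $\psi$ is in fact a trivial fibration, so that $X\cong S_1\times S_2$ with $S_1=Y$, $S_2=F$ and $\rho_{S_i}\leq 4$ (consistent with the formula $\delta_X=\max_i\rho_{S_i}-1=3$ of Th.~\ref{codim}). The mechanism is rigidity: when the fiber has large Picard number, its $(-1)$-curves and extremal structure force $\psi$ to be isotrivial, with no flips needed (so $X'=X$) and admitting a section; one then produces the complementary fibration onto $F$ and applies a splitting criterion to conclude $X\cong Y\times F$. Proving this rigidity, that is, upgrading ``a del Pezzo fibration with the correct numerical invariants'' to ``a genuine product of surfaces,'' is the technical heart of the argument and the step I expect to be hardest, precisely because $\delta_X=3$ is the boundary value $\dim X-1$ where the product structure is only barely forced.

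Finally, for $\rho_X\in\{5,6\}$ (so $\rho_F\in\{1,2\}$) the product conclusion may genuinely fail, and the classification is obtained by an explicit analysis. Here I would enumerate the admissible fibration types compatible with $\rho_Y=4$ and $\delta_X=3$ (conic bundles over Fano $3$-folds, and $\pr^2$- or small del Pezzo-surface fibrations over a surface $Y$ with $\rho_Y=4$), together with the possible extremal contractions of $X$, and use the numerical constraints imposed by the defect and the Fano condition to pin down finitely many families. I expect this case-by-case analysis, carried out through the detailed theory of divisorial and fiber-type contractions of Fano $4$-folds, to be long but essentially routine once the structural fibration of the first paragraph and the rigidity step for large $\rho_X$ are in place.
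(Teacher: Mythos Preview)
The theorem you are attempting to prove is \emph{not} proved in this paper: it is quoted from \cite{delta3} (Casagrande--Romano--Secci, Prop.~1.5) and used as a black box in the proof of Theorem~\ref{last}. There is therefore no ``paper's own proof'' to compare your proposal against.

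That said, your sketch does not accurately reflect what the structure theory of the Lefschetz defect actually provides. The results of \cite{codim} for $\delta_X\geq 3$ do not hand you a del Pezzo fibration $\psi\colon X'\to Y$ with $\rho_Y=\delta_X+1$ after some flips; rather, one obtains a smooth morphism to a positive-dimensional target (a ``$\pr^1$-like'' structure coming from a covering family of rational curves), and upgrading this to the precise fibration picture you describe, with control on both base and fiber, is essentially the content of \cite{delta3}. In particular, your derivation of $\rho_Y=4$ from $\codim\N(D,X)=3$ presupposes a decomposition $\N(D,X)=\langle[\widetilde C]\rangle\oplus\N(F,X')$ that is not automatic, and your upper bound argument (``$\rho_F\geq 5$ forces $\delta_X\geq 4$'') is circular in that it re-invokes Th.~\ref{codim} rather than giving an independent bound. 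The rigidity step you flag as the ``technical heart'' is indeed the hard part, but your proposal contains no concrete mechanism for it beyond the word ``rigidity''; the actual classification in \cite{delta3} proceeds by a detailed case analysis of the possible targets and contractions, not by a general isotriviality/splitting criterion.
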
  
Therefore in our study of Fano $4$-folds we can assume that 
$\delta_X\leq 2$, that is, $\codim\N(D,X)\leq 2$ for every prime divisor $D\subset X$. To prove that $\rho_X\leq 12$,
 we look for a prime divisor $D\subset X$ with $\dim\N(D,X)\leq 10$.

To produce such a divisor, we look at contractions of $X$.
If $X$ has an elementary contraction of fiber type, or a  divisorial elementary contraction 
$f\colon X\to Y$ with $\dim f(\Exc(f))\leq 1$, it is not difficult to
find a prime divisor $D\subset X$
such that $\dim\N(D,X)\leq 3$, hence $\rho_X\leq 5$ (Lemmas  \ref{paris} and \ref{sabri}).

 The case where $X$ has a small elementary contraction is much harder and is treated in \cite{small}, where the following result is proven. 
    \begin{thm}[\cite{small}, Theorem 1.1]\label{small}
      Let $X$ be a smooth Fano 4-fold. If $X$  has a small elementary contraction, then $\rho_X\leq 12$.
  \end{thm}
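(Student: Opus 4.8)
The plan is to reproduce, in the case of a small elementary contraction, the scheme used elsewhere in the paper: reduce to small Lefschetz defect, then exhibit a prime divisor carrying few independent curve classes. By Theorems~\ref{codim} and~\ref{delta3} we may assume $\delta_X\leq 2$: if $\delta_X\geq 3$ then $\rho_X\leq 8$ unless $X$ is a product of surfaces, and a product of del Pezzo surfaces has no small elementary contraction (every elementary contraction of such a product is of the form $c\times\Id$ or $\Id\times c$, with $c$ a divisorial or fiber-type contraction of a del Pezzo surface). With $\delta_X\leq 2$ we have $\codim\N(D,X)\leq 2$ for every prime divisor $D\subset X$, so, since $\rho_X=\dim\N(D,X)+\codim\N(D,X)$, it suffices to find a prime divisor $D\subset X$ with $\dim\N(D,X)\leq 10$.

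So let $f\colon X\to Y$ be a small elementary contraction, $R\subset\NE(X)$ the contracted ray, $\Gamma$ a minimal rational curve with $[\Gamma]\in R$, and $E:=\Exc(f)$. By the structure theory for small contractions of smooth $4$-folds (Kawamata and others), $f$ is a flipping contraction, $Y$ has terminal (non-$\Q$-factorial) singularities, $f(E)$ is a point or a curve, and $E$ is a disjoint union of surfaces swept out by the deformations of $\Gamma$, with a $\mathbb{P}^1$- or conic-bundle structure over $f(E)$ when the latter is a curve; I would also record the local picture of the flip $\chi\colon X\dashrightarrow X^+$, after which $f^+\colon X^+\to Y$ is again small, $X^+$ is $\Q$-factorial terminal with $\rho_{X^+}=\rho_X$, and $K_{X^+}$ is $f^+$-ample, so $-K_{X^+}$ is not nef. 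Since the curves numerically equivalent to $\Gamma$ cover $E$ and lie on the ray $R$, any prime divisor meeting $E$ in a curve has negative intersection with $R$, hence is non-movable; I would take $D\subset X$ to be such a divisor containing $E$ (e.g.\ a component of the stable base locus of $-K_X+A$ for $A$ suitably ample, or the strict transform of a divisor contracted by a $K_{X^+}$-MMP run on $X^+$), so that $\codim\N(D,X)\leq\delta_X\leq 2$ automatically.

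The heart of the matter is to bound $\dim\N(D,X)$. One analyses $D$ together with $f$ and with the many other elementary contractions of $X$ and $X^+$ — there are many because $\rho_X$ is large — via adjunction on $D$, the classification of (weak) del Pezzo surfaces and of the mildly singular (weak) Fano $3$-folds occurring as $D$, and the low-$(-K_X)$-degree families of rational curves covering $D$: the family of $\Gamma$, plus those produced by the extra contractions. Organizing the discussion according to $\dim f(E)$ and the types of the contractions involved, in every configuration $\N(D,X)$ turns out to be spanned by $[\Gamma]$ together with a bounded number of classes coming from a fibration- or conic-bundle-like structure on $D$, which gives $\dim\N(D,X)\leq 10$ (usually much less, $12$ being attained only in borderline configurations).

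I expect this last step to be the main obstacle. In the fiber-type and divisorial cases (Lemmas~\ref{paris} and~\ref{sabri}) a prime divisor with very small $\N(D,X)$ drops out almost at once; a small contraction carries no divisor with it, so one is forced to pass through the flip, control how prime divisors and $\NE$ transform under $\chi$, and run through a long list of cases for the local structure of $E$, $E^+$ and $\chi$ and for their interaction with the remaining contractions of $X$. Doing all of this while keeping track of numerical equivalence on $X$ itself, rather than on $X^+$ or on an auxiliary resolution, so that the bound genuinely lands on $\N(D,X)$, is the technical core; it is carried out in \cite{small}.
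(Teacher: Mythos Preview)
The paper does not prove Theorem~\ref{small}; it is imported wholesale from \cite{small}, a separate 55-page article, and used here only as a black box in the proof of Theorem~\ref{last}. There is therefore no ``paper's own proof'' to compare your proposal against.

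Your proposal is likewise not a proof: it is an outline whose decisive step you yourself defer to \cite{small} in the final sentence. The reduction to $\delta_X\leq 2$ and the observation that a product of del Pezzo surfaces has no small contraction are correct, and this is indeed how \cite{small} begins. But from that point on the sketch breaks down. The claim ``any prime divisor meeting $E$ in a curve has negative intersection with $R$'' is false: a prime divisor $D$ containing an irreducible curve $\Gamma$ with $[\Gamma]\in R$ can perfectly well have $D\cdot\Gamma\geq 0$. Your two proposed constructions of $D$ (a stable-base-locus component, or a divisor contracted after flipping) come with no argument for why they exist or why $\dim\N(D,X)$ should be small. And the third paragraph, where the bound $\dim\N(D,X)\leq 10$ is supposed to be obtained, contains no argument at all, only a list of tools and the assertion that ``in every configuration'' things work out.

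The actual argument in \cite{small} is not organized around a single prime divisor $D$ found near $\Exc(f)$; rather it runs an MMP producing a sequence of flips $X\dasharrow X'$ and a rational contraction to a lower-dimensional target, with a long case analysis governed by invariants of the fixed divisors appearing along the way. If your aim is to reproduce that result, there is no shortcut through the outline you have written.
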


  We are left with the case where every elementary contraction $f\colon X\to Y$ is of type $(3,2)$. 
In this case we show  (Theorem \ref{delpezzo}) that, if $\rho_X\geq 8$, we can apply our previous study of elementary contractions of type $(3,2)$, so that if $E:=\Exc(f)$ and $S:=f(E)\subset Y$, then 
$S$ is a smooth del Pezzo surface. This implies that $\dim\N(S,Y)\leq\rho_S\leq 9$,  $\dim\N(E,X)=\dim\N(S,Y)+1\leq 10$, and finally that $\rho_X\leq 12$,
proving Theorem \ref{main}. 

\medskip

The structure of the paper is as follows. In \S \ref{prel} we gather some preliminary results. Then in \S \ref{dp} we develop our study of elementary contractions of type $(3,2)$, while in \S \ref{final} we prove Theorem \ref{main}. 
\subsection{Notation}\label{notation}
\noindent We work over the field of complex numbers.

We will frequently use the definitions and apply the techniques of birational geometry and the Minimal Model Program, without explicit references. We refer the reader to \cite{debarreUT,matsuki,kollarmori} for background
and details.

Let $X$ be a  projective variety. 

 We denote by $\mathcal{N}_{1}(X)$ (respectively, $\mathcal{N}^{1}(X)$) the real vector space of one-cycles (respectively, Cartier divisors) with real coefficients, modulo numerical equivalence;
 $\dim \mathcal{N}_{1}(X)=\dim \mathcal{N}^{1}(X)=\rho_{X}$ is the Picard number of $X$.

 For any closed subset $Z\subset X$, we denote by $\N(Z,X)$ the subspace of $\N(X)$ spanned by classes of curves in $Z$.

  Let $C$ be a one-cycle of $X$, and $D$ a Cartier divisor. We denote by $[C]$ (respectively, $[D]$) the numerical equivalence class in $\mathcal{N}_{1}(X)$ (respectively, $\mathcal{N}^{1}(X)$). We also denote by $D^{\perp}\subset\N(X)$ the orthogonal hyperplane to the class $[D]$. 

  The symbol $\equiv$ stands for numerical equivalence (for both one-cycles and divisors), and $\sim$ stands for linear equivalence of divisors.

  $\operatorname{NE}(X)\subset \mathcal{N}_{1}(X)$ is the convex cone generated by classes of effective curves, and $\overline{\NE}(X)$ is its closure.  An \emph{extremal ray} $R$ is a 
  one-dimensional face of $\overline{\NE}(X)$. If $D$ is a Cartier divisor in $X$, we write $D\cdot R>0$, $D\cdot R=0$, and so on, if $D\cdot \gamma>0$, $D\cdot \gamma=0$, and so on, for a non-zero class $\gamma\in R$. We say that $R$ is $K$-negative if $K_X\cdot R<0$.

  A \emph{contraction} is a surjective morphism, with connected fibers, between normal projective varieties.
  
Suppose that
$X$ has terminal and locally factorial singularities, and is Fano. Then $\NE(X)$ is a convex polyhedral cone.
Given a 
contraction $f\colon X\to Y$, we denote by $\text{NE}(f)$ the convex subcone of $\text{NE}(X)$ generated by classes of curves contracted by $f$; we recall that there is a bijection between contractions of $X$ and faces of $\NE(X)$, given by $f\mapsto\NE(f)$. Moreover $\dim\NE(f)=\rho_X-\rho_Y$, in particular $f$ is \emph{elementary} (that is, $\rho_X-\rho_Y=1$) if and only if $\NE(f)$ is an extremal ray.

When $\dim X=4$, we say that an extremal ray $R$ is of type $(3,2)$ if the associated elementary contraction $f$ is of type $(3,2)$, namely if $f$ is divisorial with $\dim f(\Exc(f))=2$. We also set
 $E_R:=\Exc(f)$
 and denote by  $C_R\subset E_R$ a general fiber of $f_{|E_R}$; note that $E_R\cdot C_R=-1$ and $-K_X\cdot C_R=1$.

We will also consider the cones
 $\Eff(X)\subset\Nu(X)$ of classes of effective divisors, and $\mov(X)\subset\N(X)$ of classes of curves moving in a family covering $X$.
 Since $X$ is Fano,  both cones are  polyhedral; we have the duality relation $\Eff(X)=\mov(X)^{\vee}$.

 If $\ma{N}$ is a real vector space and $S\subset\ma{N}$ is a subset, we denote by $\R S$ the linear span of $S$.
 \section{Preliminaries}\label{prel}
 \noindent In this section we gather some preliminary results that will be used in the sequel.

 Andreatta and Wi\'sniewski have classified the possible $2$-dimensional fibers of an elementary contraction of type $(3,2)$ of a smooth Fano $4$-fold. In doing this, they also describe precisely the singularities both of the target, and of the image of the exceptional divisor, as follows.
    \begin{thm}[\cite{AW}, Theorem on p.\ 256]\label{singularities}
  Let $X$ be a smooth Fano $4$-fold and
  $f\colon X\to Y$ an elementary contraction
 of type $(3,2)$.
  Set $S:=f(\Exc(f))$.
  
 Then
$f$ can have at most finitely many $2$-dimensional fibers. Outside the images of these fibers, $Y$ and $S$ are smooth, and $f$ is the blow-up of $S$.

Let $y_0\in S\subset Y$ be the image of a $2$-dimensional fiber; then one of the following holds:
   \begin{enumerate}[$(i)$]
\item $S$ is smooth at $y_0$, while
  $Y$ has an ordinary double point at $y_0$, locally factorial and terminal;
  \item $Y$ is smooth at $y_0$, while
    $S$ is singular at $y_0$. More precisely either $S$ is not normal at $y_0$, or it has a singularity of type $\frac{1}{3}(1,1)$ at $y_0$ (as the cone over a twisted cubic).
  \end{enumerate}
  In particular  the singularities of $Y$ are at most isolated, locally factorial,  and terminal.
\end{thm}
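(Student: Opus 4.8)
The plan is to reduce the statement to a local study of the fibres of $f$: first the structure away from the exceptional fibres, then the finiteness of the two-dimensional fibres, and finally the classification of a single two-dimensional fibre together with the normal-bundle data that pins down the germ of $Y$ and of $S$ at its image.

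First I would collect the cheap structural facts. Write $E:=\Exc(f)$, $S:=f(E)$, and let $R$ be the contracted extremal ray. Every effective curve with class in $R$ meets $E$ negatively, hence lies in $E$; so $f$ is an isomorphism over $Y\setminus S$ and every positive-dimensional fibre of $f$ is contained in $E$. Since $E$ is irreducible of dimension $3$ and dominates the surface $S$, no fibre is $3$-dimensional and a general fibre of $f_{|E}$ is a curve; by smoothness of $X$ and ampleness of $-K_X$ that general fibre is a free rational curve $\Gamma$ with $-K_X\cdot\Gamma=1$ and $N_{\Gamma/X}\cong\ol(-1)\oplus\ol^{\oplus 2}$ (so $\ell(R)=1$). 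A standard deformation argument — the Hilbert scheme of such curves has the expected dimension, hence the normal bundle cannot degenerate — then shows that over the open locus of $S$ where all fibres are $1$-dimensional, $f_{|E}$ is a $\pr^1$-bundle without multiple or reducible fibres and $f$ is the blow-up of a smooth surface. For finiteness: if the locus $B\subset S$ over which the fibre is $2$-dimensional had $\dim B\ge 1$, then $f^{-1}(B)\subset E$ would have dimension $\ge\dim B+2\ge 3$, hence would equal $E$, forcing $S=f(E)=B$ of dimension $\le 1$, which is absurd; so $B$ is finite. This already gives the first two sentences of the statement.

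The substantial part is, for $y_0\in B$, the analysis of the two-dimensional fibre $F:=f^{-1}(y_0)$, which is a prime divisor in $E$ swept out by minimal curves of $R$. Here I would follow the adjunction-theoretic strategy of Andreatta and Wi\'sniewski: restrict the relevant $f$-ample line bundle to $F$ and combine Kawamata--Viehweg-type vanishing, Fujita/Apollonius base-point-freeness, and the deformation theory of the covering family of lines in $F$, in order to force $F$ to be $\pr^2$ or a (possibly singular) quadric surface and to determine the two normal bundles: the line bundle $N_{F/E}$ (negative, of small degree) and the rank-two bundle $N_{F/X}$, which sits in $0\to N_{F/E}\to N_{F/X}\to \ol_X(E)_{|F}\to 0$. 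The target output is that for $F\cong\pr^2$ one has $N_{F/E}\cong\ol_{\pr^2}(-1)$ with $N_{F/X}\cong\ol_{\pr^2}(-1)^{\oplus 2}$ or $\ol_{\pr^2}\oplus\ol_{\pr^2}(-2)$, and that the quadric fibres are the ones producing the non-normal points of $S$. I expect this classification to be the main obstacle: it is precisely the step that exploits the full strength of the smoothness and Fano hypotheses and is not a formal consequence of anything available beforehand.

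Granting the normal data, the rest is a local computation. The formal (or analytic) germ of $f$ along $F$ is identified with the germ along the zero section of the total space of $N_{F/X}\to F$, with that zero section contracted. Contracting $\pr^2$ inside the total space of $\ol_{\pr^2}(-1)^{\oplus 2}$ realises the germ of $Y$ at $y_0$ as the four-dimensional ordinary double point while $S$ — traced out by the $\ol_{\pr^2}(-1)$-summand $N_{F/E}$ — is smooth there; since an isolated hypersurface singularity of dimension $\ge 4$ is factorial (hence locally factorial) and terminal, this is case $(i)$. Contracting $\pr^2$ inside the total space of $\ol_{\pr^2}\oplus\ol_{\pr^2}(-2)$ leaves $Y$ smooth and gives $S$ a $\frac{1}{3}(1,1)$ singularity, and the quadric fibres account for the remaining, non-normal, points of $S$: this is case $(ii)$. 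Finally the singularities of $Y$ are isolated because $B$ is finite, and local factoriality together with terminality of $Y$ follow from the explicit ODP description in case $(i)$ (and from smoothness in case $(ii)$) — or, alternatively, from the general fact that a divisorial contraction of a $K$-negative ray out of a smooth variety has $\Q$-factorial terminal target, refined to factoriality at the ODP by a cohomology/parafactoriality computation.
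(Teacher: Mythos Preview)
The paper does not prove this theorem at all: it is quoted verbatim from Andreatta--Wi\'sniewski \cite{AW} as a black box (``Theorem on p.~256''), and the paper immediately moves on to its own lemmas. So there is no ``paper's own proof'' to compare against; your proposal is really a sketch of the Andreatta--Wi\'sniewski argument itself.

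As such a sketch, your outline is broadly faithful to their strategy --- general fibre analysis, finiteness of jumping fibres by dimension count, then the hard adjunction-theoretic classification of the two-dimensional fibre $F$ and its normal bundle, followed by reading off the local germ of $(Y,S)$ at $y_0$. Two caveats on the details. First, the list of possible $F$ in \cite{AW} is longer than just $\pr^2$ and a quadric: reducible fibres and the quadric cone also occur and feed into the non-normal branch of case~$(ii)$, so your dichotomy ``$\pr^2$ versus quadric'' is a simplification. Second, your identification of the $\frac{1}{3}(1,1)$ case with $N_{F/X}\cong\ol_{\pr^2}\oplus\ol_{\pr^2}(-2)$ is off: contracting the zero section in that bundle does not produce a smooth $4$-fold containing a surface with a $\frac{1}{3}(1,1)$ point. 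In the relevant \cite{AW} case one has $F\cong\pr^2$ embedded so that $E_{|F}\cong\ol_{\pr^2}(-3)$ (hence $N_{F/E}$ has degree $-3$ on the lines of $F$), and it is the image of $E$, not of $X$, that acquires the cone-over-twisted-cubic singularity while $Y$ stays smooth. If you want to turn this into an actual proof you would need to go through the full case list in \cite{AW}, not just the two normal-bundle splittings you name.
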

We will need the following elementary estimates on $\dim\N(Z,X)$ in terms of a contraction $f\colon X\to Y$ and of $f(Z)\subset Y$.
\begin{remark}\label{pushforward}
  Let $f\colon X\to Y$ be a contraction between normal projective varieties, and  $Z\subset X$ an irreducible closed subset. Consider the pushforward of one-cycles $f_*\colon \N(X)\to\N(Y)$. We have the following:
  \begin{enumerate}[$(a)$]
    \item
      $f_*(\N(Z,X))=\N(f(Z),Y)$;
      \item $\dim\N(Z,X)\leq\rho_X-\rho_Y+\dim\N(f(Z),Y)$;
\item      
if $\dim f(Z)\leq 1$, then $\dim\N(Z,X)\leq \rho_X-\rho_Y+1$.
\end{enumerate}

Indeed $(a)$ follows from the definitions and the surjectivity of $f$, and $(b)$ follows from $(a)$ because
  $f_*$ is a surjective linear map. For $(c)$, we have
  $\N(f(Z),Y)=\{0\}$ if $f(Z)=\{pt\}$, and  $\N(f(Z),Y)=\R[f(Z)]$ if $f(Z)$ is a curve; in any case
 $\dim\N(f(Z),Y)\leq 1$, and we apply $(b)$.
\end{remark}
Now we give some simple preliminary results on extremal rays of type $(3,2)$.
\begin{lemma}\label{Fanotarget}
Let $X$ be a smooth Fano $4$-fold and $f\colon X\to Y$ an elementary  contraction of type $(3,2)$; set $E:=\Exc(f)$.
If $\dim\N(E,X)\geq 4$, then $E\cdot R\geq 0$
for every extremal ray $R$ of $X$ different from $\NE(f)$,
 and $Y$ is Fano.
\end{lemma}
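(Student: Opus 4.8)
The plan is to prove first that $E\cdot R\geq 0$ for every extremal ray $R\neq\NE(f)$, and then to deduce that $Y$ is Fano. For the second step recall from Theorem~\ref{singularities} that $Y$ has only isolated, locally factorial and terminal (hence Gorenstein) singularities, so $-K_Y$ is Cartier and $f$ is, outside the finitely many images of $2$-dimensional fibers, the blow-up of the smooth surface $S$; thus $f^*(-K_Y)=-K_X+E$ and $(-K_X+E)\cdot C_{\NE(f)}=1-1=0$. So the whole content of the statement is concentrated in the inequality, and this is where the hypothesis $\dim\N(E,X)\geq 4$ must be used.

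To prove the inequality, suppose by contradiction that $E\cdot R<0$ for some extremal ray $R\neq\NE(f)$. Then every irreducible curve with class in $R$ lies in $E$, so $\Lo(R)\subseteq E$; since $E$ is a prime divisor, $R$ is not of fiber type. Moreover $E\cdot\NE(f)=E\cdot C_{\NE(f)}=-1<0$, hence $E$ is strictly negative on every nonzero class of the cone spanned by $R$ and $\NE(f)$, so every effective curve with class in that cone is contained in $E$. Let $h\colon X\to Z$ be the contraction of the smallest face $\mathcal F$ of $\NE(X)$ containing $R$ and $\NE(f)$; since $\NE(f)\subseteq\mathcal F$, $h$ factors as $h=h'\circ f$ with $h'\colon Y\to Z$, and after disposing of the degenerate possibilities one is reduced to the case $\dim\mathcal F=2$ and $\Exc(h)=E$. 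Now every curve of $E$ is either contracted by $h$ (its class then lying in $\mathcal F$) or mapped to a curve of $h(E)$, so
$$\dim\N(E,X)\ \leq\ \dim\mathcal F+\dim\N(h(E),Z)\ =\ 2+\dim\N(h(E),Z).$$
If $\dim h(E)\leq 1$ (in particular if $h$ contracts $E$ to a point) this forces $\dim\N(E,X)\leq 3$, contradicting the hypothesis.

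The case that remains — $h(E)$ a surface, equivalently the second ray $R$ being divisorial with exceptional divisor $E$, or small with $\Lo(R)$ a surface dominating $S$ under $f$ — is the crux, and I expect it to be the main obstacle. Here $E$ carries two genuinely different ``fibration-like'' structures over surfaces ($f|_E\colon E\to S$ and $h|_E\colon E\to h(E)=h'(S)$), and one must play them against each other: using the equality $\dim\N(E,X)=\dim\N(S,Y)+1$ valid for contractions of type $(3,2)$ (and the analogue for the induced contraction of $Y$), the Andreatta--Wi\'sniewski description of the $2$-dimensional fibers (Theorem~\ref{singularities}), and possibly a direct computation of $E\cdot C$ via the conormal bundle along curves $C\subseteq E$ with $f(C)$ a curve, one aims to conclude either $\dim\N(E,X)\leq 3$ again, or a contradiction with $X$ being Fano.

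Granting the inequality, the Fano conclusion is immediate: for every extremal ray $R\neq\NE(f)$ we have
$$f^*(-K_Y)\cdot R=(-K_X+E)\cdot R=(-K_X\cdot R)+(E\cdot R)>0,$$
since $-K_X$ is ample, while $f^*(-K_Y)\cdot\NE(f)=0$. As $X$ is Fano, $\NE(X)$ is polyhedral, so $f^*(-K_Y)$ is nef and $f^*(-K_Y)^{\perp}\cap\NE(X)=\NE(f)$. Since $f_*$ maps $\NE(X)$ onto $\overline{\NE}(Y)$ with $\ker f_*\cap\NE(X)=\NE(f)$, Kleiman's criterion on $Y$ gives that $-K_Y$ is ample; together with the description of the singularities of $Y$ this shows that $Y$ is a Fano variety.
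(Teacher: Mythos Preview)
Your deduction of ``$Y$ is Fano'' from the inequality $E\cdot R\geq 0$ is correct and is exactly the argument the paper uses: $f^*(-K_Y)=-K_X+E$ is nef with $(-K_X+E)^\perp\cap\NE(X)=\NE(f)$, hence $-K_Y$ is ample. For the inequality itself, however, the paper does not argue directly; it simply invokes \cite[Lemma~2.16 and Rem.~2.17]{blowup}, where the statement ``$\NE(f)$ is the unique extremal ray with $E\cdot R<0$'' is established.

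Your attempt at a self-contained proof of the inequality has two genuine gaps. First, the sentence ``after disposing of the degenerate possibilities one is reduced to the case $\dim\mathcal F=2$ and $\Exc(h)=E$'' hides real content: the smallest face $\mathcal F$ of $\NE(X)$ containing $R$ and $\NE(f)$ need not be $2$-dimensional, and if it is larger there is no reason for $E$ to be negative on every extremal ray of $\mathcal F$, so the conclusion $\Exc(h)=E$ is unjustified. You would need either to prove that $R+\NE(f)$ is itself a face (which is essentially what you are trying to show, since $E\cdot R\geq 0$ would follow), or to run the argument with a different auxiliary map.

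Second, and more seriously, you explicitly leave the case $\dim h(E)=2$ open, calling it ``the crux'' and ``the main obstacle'' and offering only a list of tools (``the Andreatta--Wi\'sniewski description\dots possibly a direct computation\dots'') rather than an argument. This case genuinely occurs in principle: nothing you have written excludes $E$ from being the common exceptional divisor of two different divisorial contractions onto surfaces, and the inequality $\dim\N(E,X)\leq 2+\dim\N(h(E),Z)$ gives no contradiction when $h(E)$ is a surface of large Picard number. The actual argument in \cite{blowup} handles this via a more careful analysis; as written, your proof is incomplete precisely at the point where the hypothesis $\dim\N(E,X)\geq 4$ must do its work.
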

\begin{proof}
It follows from \cite[Lemma 2.16 and Remark 2.17]{blowup}
that $\NE(f)$ is the unique extremal ray of $X$ having negative intersection with $E$,
 $-K_X+E=f^*(-K_Y)$ is nef, and $(-K_X+E)^{\perp}\cap\NE(X)=\NE(f)$, so that $-K_Y$ is ample.
\end{proof}
\begin{lemma}\label{fabri}
  Let $X$ be a smooth Fano $4$-fold and $R_1,R_2$  extremal rays of $X$ of type $(3,2)$  such that $\dim\N(E_{R_1},X)\geq 4$ and   $E_{R_1}\cdot R_2=0$.

  Then 
     $E_{R_2}\cdot R_1=0$ and $R_1+R_2$ is a face of $\NE(X)$ whose associated contraction is birational, with exceptional locus $E_{R_1}\cup E_{R_2}$.
\end{lemma}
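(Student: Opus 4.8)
The plan is to run the argument on the target $Y_1$ of the elementary contraction $f_1\colon X\to Y_1$ of $R_1$, which is Fano by Lemma~\ref{Fanotarget}, with $-K_X+E_{R_1}=f_1^*(-K_{Y_1})$ nef and $(-K_X+E_{R_1})^{\perp}\cap\NE(X)=R_1$. Write $S_1:=f_1(E_{R_1})$, so that $\Sing(Y_1)\subseteq S_1$ and $f_1$ is the blow-up of $S_1$ outside finitely many fibres. As a preliminary remark, $E_{R_1}\neq E_{R_2}$, because $E_{R_1}\cdot R_2=0\neq -1=E_{R_2}\cdot R_2$; hence $E_{R_2}\cdot R_1\geq 0$, for otherwise $\Lo(R_1)=E_{R_1}$ would be contained in the prime divisor $E_{R_2}$.

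The key step is to transport $R_2$ to $Y_1$. The general fibre $C_{R_2}$ is not contained in $E_{R_1}$ (else $E_{R_2}\subseteq E_{R_1}$), and since $E_{R_1}\cdot R_2=0$ it is disjoint from $E_{R_1}$; therefore $f_1$ maps it isomorphically onto a smooth rational curve $\bar C_2:=f_1(C_{R_2})\subset Y_1\setminus S_1$, and these curves cover the prime divisor $E'_{R_2}:=f_1(E_{R_2})$. Intersecting the blow-up relation $f_1^*E'_{R_2}=E_{R_2}+(E_{R_2}\cdot R_1)\,E_{R_1}$ with a fibre $C_{R_1}$ of $f_1|_{E_{R_1}}$ and with $C_{R_2}$ gives $E'_{R_2}\cdot\bar C_2=-1$ and $-K_{Y_1}\cdot\bar C_2=(-K_X+E_{R_1})\cdot C_{R_2}=1$. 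Since $-K_{Y_1}$ is ample and Cartier, every curve on $Y_1$ has anticanonical degree at least $1$; hence $\bar C_2$ cannot degenerate into a reducible or non-reduced cycle, the covering family of $E'_{R_2}$ it belongs to is unsplit, and (appealing to \cite{blowup}) its numerical class $\bar R_2:=f_{1*}R_2$ spans an extremal ray of $\NE(Y_1)$. Its contraction $h\colon Y_1\to Z$ is then birational with $\Exc(h)=E'_{R_2}$: the $\bar C_2$ cover $E'_{R_2}$ and are $h$-contracted, and conversely any $h$-contracted curve $C$ satisfies $E'_{R_2}\cdot C<0$, hence lies in $E'_{R_2}$. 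I expect this to be the main obstacle: passing from the positivity data attached to the single curve $\bar C_2$ to the global fact that $\bar R_2$ is a face of $\NE(Y_1)$ for the (possibly singular) Fano fourfold $Y_1$.

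Granting this, the statement about the face is formal. The preimage $F:=f_{1*}^{-1}(\bar R_2)\cap\NE(X)$ is a face of $\NE(X)$ containing $R_1$ and $R_2$; if $R\subseteq F$ were any other extremal ray, then, reducing a generator of $R$ modulo $\ker f_{1*}=\R[C_{R_1}]$, one would place it in the relative interior of the two-dimensional cone $R_1+R_2$, which contains no extremal ray — so $F=R_1+R_2$. Its associated contraction is $\varphi:=h\circ f_1\colon X\to Z$, a composition of birational morphisms, hence birational, with $\Exc(\varphi)=E_{R_1}\cup f_1^{-1}(\Exc h)=E_{R_1}\cup E_{R_2}$.

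Finally, to get $E_{R_2}\cdot R_1=0$, note that the blow-up relation above gives $E_{R_2}\cdot R_1=\operatorname{mult}_{S_1}(E'_{R_2})$, so it suffices to show $S_1\not\subseteq E'_{R_2}$. If $S_1\subseteq E'_{R_2}$, then the general fibre of $h$, being disjoint from $S_1$, shows that $h$ cannot map $S_1$ onto $h(E'_{R_2})$; so $h$ would contract $S_1$ to dimension $\leq 1$, and $\varphi$ would contract the threefold $E_{R_1}$ to dimension $\leq 1$. Excluding this is the second delicate point; I would treat it using the structure of the birational contraction $h$ together with the location $\Sing(Y_1)\subseteq S_1$ from Theorem~\ref{singularities}, or by quoting the corresponding assertion from \cite{blowup}. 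Once $S_1\not\subseteq E'_{R_2}$ is known, a general fibre $C_{R_1}=f_1^{-1}(p)$ with $p\in S_1\setminus E'_{R_2}$ is disjoint from $f_1^{-1}(E'_{R_2})\supseteq E_{R_2}$, so $E_{R_2}\cdot R_1=E_{R_2}\cdot C_{R_1}=0$.
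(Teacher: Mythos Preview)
Your route through $Y_1$ is genuinely different from the paper's, and the two places you yourself flag as delicate are exactly where it falls short. The paper never leaves $X$: it picks a nef divisor $H$ with $H^{\perp}\cap\NE(X)=R_2$, sets $H':=H+(H\cdot C_{R_1})E_{R_1}$, and checks directly that $H'$ is nef with $(H')^{\perp}\cap\NE(X)=R_1+R_2$, using only that $E_{R_1}\cdot R\geq 0$ for every extremal ray $R\neq R_1$ (Lemma~\ref{Fanotarget}). The exceptional locus is then read off from the shape of $H'$, and $E_{R_2}\cdot R_1=0$ is obtained by citing \cite[Lemma~2.2(b)]{cdue}. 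No passage to $Y_1$, no extremality question on a singular target.

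Your main gap is the extremality of $\bar R_2=f_{1*}R_2$ on $Y_1$. Unsplitness of the family of $\bar C_2$'s together with $E'_{R_2}\cdot\bar C_2=-1$ does not by itself force $[\bar C_2]$ to span an extremal ray; the statements in \cite{blowup} that would yield uniqueness of the $E'_{R_2}$-negative ray require a hypothesis of the type $\dim\N(E'_{R_2},Y_1)\geq 4$ (and smoothness), neither of which you have---the assumption in the lemma is on $E_{R_1}$, not on $E_{R_2}$. In fact the quickest way to see that $\bar R_2$ is extremal is to first prove that $R_1+R_2$ is a face of $\NE(X)$ via the paper's divisor $H'$ and then push forward, which makes the detour through $Y_1$ circular. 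Your second gap, by contrast, closes easily once the first is granted: if $S_1\subseteq E'_{R_2}$ then, as you observe, $\dim\varphi(E_{R_1})=\dim h(S_1)\leq 1$, whence $\N(E_{R_1},X)\subseteq\varphi_*^{-1}\bigl(\N(\varphi(E_{R_1}),Z)\bigr)$ has dimension at most $\dim\ker\varphi_*+1=3$, contradicting the hypothesis $\dim\N(E_{R_1},X)\geq 4$.
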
  
\begin{proof}  Let $H$ be a nef divisor on $X$ such that $H^{\perp}\cap\NE(X)=R_2$, and set $H':=H+(H\cdot C_{R_1})E_{R_1}$. Then
  $H'\cdot C_{R_1}= H'\cdot C_{R_2}=0$, and if $R_3$ is an extremal ray of $\NE(X)$ different from $R_1$ and $R_2$, we have $E_{R_1}\cdot R_3\geq 0$ by Lemma \ref{Fanotarget}, hence $H'\cdot R_3>0$. Therefore $H'$ is nef and $(H')^{\perp}\cap\NE(X)=R_1+R_2$ is a face of $\NE(X)$.

  If $\Gamma\subset X$ is an irreducible curve with $[\Gamma]\in R_1+R_2$, then
$H'\cdot\Gamma=0$, so that either $E_{R_1}\cdot\Gamma<0$ and $\Gamma\subset E_{R_1}$, or $H\cdot \Gamma=0$, $[\Gamma]\in R_2$ and $\Gamma\subset E_{R_2}$. This shows that 
the contraction of $R_1+R_2$ is birational with exceptional locus $E_{R_1}\cup E_{R_2}$.

\medskip

 We show that
 $E_{R_2}\cdot R_1=0$.
 By contradiction, suppose that  $E_{R_2}\cdot R_1\neq 0$.
 If  $E_{R_2}\cdot R_1<0$, then $E_{R_1}=E_{R_2}$, thus $\dim\N(E_{R_2},X)\geq 4$, contradicting Lemma \ref{Fanotarget}.

 Suppose that 
 $E_{R_2}\cdot R_1>0$, and
 let $f_i$ be the contraction of $R_i$, $i=1,2$.
 Since   $E_{R_2}\cdot R_1>0$, $E_{R_2}$ meets every non-trivial fiber of $f_1$,
 and $f_1(E_{R_1}\cap E_{R_2})=f_1(E_{R_1})$; let $Z$ be
 an irreducible component of $E_{R_1}\cap E_{R_2}$ such that $f_1(Z)=f_1(E_{R_1})$.

 On the other hand
 $E_{R_1}\cdot R_2=0$, thus $E_{R_1}\cap E_{R_2}$ is a union of fibers of $f_2$, and $\dim f_2(Z)\leq 1$. 
This yields
 $\dim\N(Z,X)\leq 2$ by Remark \ref{pushforward}$(c)$.

 We also have $f_1(Z)=f_1(E_{R_1})$, thus 
 $(f_1)_*(\N(E_{R_1},X))=(f_1)_*(\N(Z,X))$ by Remark \ref{pushforward}$(a)$, and
$\dim (f_1)_*(\N(E_{R_1},X))\leq \dim\N(Z,X)\leq 2$.
 We deduce that
 $\dim \N(E_{R_1},X)\leq 3$  by Remark \ref{pushforward}$(b)$, against our assumptions.
\end{proof}
\begin{lemma}\label{chitarra}
  Let $X$ be a smooth Fano $4$-fold and $R_1,R_2$ distinct extremal rays of $X$ of type $(3,2)$  with $\dim\N(E_{R_i},X)\geq 4$ for $i=1,2$. If there exists a birational contraction $g\colon X\to Z$ with $R_1,R_2\subset\NE(g)$,
  then
  $E_{R_1}\cdot R_2=E_{R_2}\cdot R_1=0$.  
\end{lemma}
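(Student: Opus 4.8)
The plan is to show that each $E_{R_i}$ intersects the ray $R_j$ ($i\neq j$) non-negatively, and then that equality must hold because $R_1+R_2$ lies in the face $\NE(g)$ of a birational contraction. First, since $\dim\N(E_{R_i},X)\geq 4$, Lemma~\ref{Fanotarget} applies to $R_i$ and tells us that $E_{R_i}\cdot R\geq 0$ for every extremal ray $R\neq R_i$; in particular $E_{R_1}\cdot R_2\geq 0$ and $E_{R_2}\cdot R_1\geq 0$. So it suffices to rule out strict inequality.

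Suppose for contradiction that $E_{R_1}\cdot R_2>0$. The idea is to derive a contradiction with the fact that $g\colon X\to Z$ is birational and contracts both $R_1$ and $R_2$. Pick an irreducible curve $\Gamma_2$ with $[\Gamma_2]\in R_2$ and $E_{R_1}\cdot\Gamma_2>0$; since $R_2$ is of type $(3,2)$, curves with class in $R_2$ cover the divisor $E_{R_2}$, and a general one among these, call it a general fiber-type curve $C_{R_2}$, has $E_{R_2}\cdot C_{R_2}=-1<0$. Now I want to compare the contraction $h$ of the face $R_1+R_2$ — which is contained in $\NE(g)$, hence $h$ factors $g$ and is itself birational — with its exceptional locus. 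Using Lemma~\ref{Fanotarget} for $R_1$, the divisor $-K_X+E_{R_1}=f_1^*(-K_Y)$ is nef with $(-K_X+E_{R_1})^\perp\cap\NE(X)=R_1$, so curves in $R_2$ have positive intersection with $-K_X+E_{R_1}$, i.e. $-K_X\cdot R_2 > -E_{R_1}\cdot R_2$. This is compatible with $E_{R_1}\cdot R_2>0$, so a direct numerical contradiction on a single curve is not immediate; the real input has to come from the birational contraction.

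The cleanest route is to argue as in Lemma~\ref{fabri} but in reverse. Build a nef divisor $H'$ supporting the face $R_1+R_2$: take $H$ nef with $H^\perp\cap\NE(X)=R_2$ (such $H$ exists because $R_2$ is an extremal ray of the polyhedral cone $\NE(X)$), and set $H':=H+(H\cdot C_{R_1})E_{R_1}$; by Lemma~\ref{Fanotarget} for $R_1$ this $H'$ is nef with $(H')^\perp\cap\NE(X)\supseteq R_1+R_2$, and in fact, since $g$ is birational and $R_1+R_2\subseteq\NE(g)$, the contraction of $(H')^\perp\cap\NE(X)$ is birational too; hence its exceptional locus is a union of finitely many prime divisors and every curve $\Gamma$ with $H'\cdot\Gamma=0$ lies in one of them. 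Now take $\Gamma$ with $[\Gamma]\in R_2$ and $E_{R_1}\cdot\Gamma>0$: then $H'\cdot\Gamma = H\cdot\Gamma + (H\cdot C_{R_1})(E_{R_1}\cdot\Gamma) = (H\cdot C_{R_1})(E_{R_1}\cdot\Gamma)>0$ (using $H\cdot C_{R_1}>0$ since $C_{R_1}\notin R_2$), contradicting $[\Gamma]\in(H')^\perp$. Therefore $E_{R_1}\cdot R_2=0$, and then Lemma~\ref{fabri} (with $R_1,R_2$ in that order, whose hypotheses are now met) gives $E_{R_2}\cdot R_1=0$ as well.

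The main obstacle I anticipate is the step asserting that the exceptional locus of the contraction of $R_1+R_2$ consists of prime divisors with the property that $H'$-trivial curves are contained in them: one must be careful that the face $(H')^\perp\cap\NE(X)$ could a priori be strictly larger than $R_1+R_2$, and that the ambient contraction being birational really does force the intermediate one to be birational — here the bijection between contractions and faces of $\NE(X)$, and the fact that a face of a face is a face, is what makes it work, together with the observation that $g$ birational implies every contraction factoring through a subface of $\NE(g)$ is birational (its general fiber is a point). Once that structural point is in place, the numerical computation with $H'$ is routine.
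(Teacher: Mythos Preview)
There is a genuine gap. Your assertion that $(H')^\perp\cap\NE(X)\supseteq R_1+R_2$ is not justified, and in fact it is \emph{false} under the contradiction hypothesis $E_{R_1}\cdot R_2>0$: computing directly,
\[
H'\cdot C_{R_2}=H\cdot C_{R_2}+(H\cdot C_{R_1})(E_{R_1}\cdot C_{R_2})=(H\cdot C_{R_1})(E_{R_1}\cdot C_{R_2})>0,
\]
so $R_2\not\subset(H')^\perp$. The ``contradiction'' you derive at the end is exactly this same computation; it contradicts only your own unproved claim, not the hypothesis $E_{R_1}\cdot R_2>0$. In Lemma~\ref{fabri} the analogous construction works precisely because $E_{R_1}\cdot R_2=0$ is an \emph{assumption} there; you cannot import the conclusion of that construction while negating its key hypothesis. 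Note also that the birationality of $g$ --- the only new datum compared with Lemma~\ref{fabri} --- plays no real role in your argument: the detour about exceptional loci of the contraction of $(H')^\perp\cap\NE(X)$ is never used, and your anticipated ``obstacle'' (the face being larger than $R_1+R_2$) is the wrong worry; the face is in fact smaller, namely just $R_1$.

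The paper's argument uses birationality of $g$ via the duality $\Eff(X)^\vee=\mov(X)$. Assuming $E_{R_1}\cdot R_2>0$, one has $E_{R_1}\cdot C_{R_2}\geq 1$, hence $E_{R_1}\cdot(C_{R_1}+C_{R_2})\geq 0$; and $E_{R_2}\cdot R_1>0$ (otherwise Lemma~\ref{fabri} applied to $R_2,R_1$ would give $E_{R_1}\cdot R_2=0$), hence also $E_{R_2}\cdot(C_{R_1}+C_{R_2})\geq 0$. Every other prime divisor is nonnegative on both $C_{R_i}$. Therefore $[C_{R_1}+C_{R_2}]\in\Eff(X)^\vee=\mov(X)$; but $[C_{R_1}+C_{R_2}]\in\NE(g)$, forcing $g$ to be of fiber type --- the desired contradiction.
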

\begin{proof}
  We note first of all that $E_{R_i}\cdot R_j\geq 0$ for $i\neq j$ by 
 Lemma \ref{Fanotarget}.
 Suppose that $E_{R_1}\cdot R_2>0$. Then $E_{R_1}\cdot (C_{R_1}+C_{R_2})=E_{R_1}\cdot C_{R_2}-1\geq 0$. Moreover 
 $E_{R_2}\cdot R_1>0$ by Lemma \ref{fabri}, so that $E_{R_2}\cdot (C_{R_1}+C_{R_2})\geq 0$.
 On the other hand for every prime divisor $D$ different from $E_{R_1},E_{R_2}$ we have $D\cdot (C_{R_1}+C_{R_2})\geq 0$, therefore $[C_{R_1}+C_{R_2}]\in\Eff(X)^{\vee}=\mov(X)$. Since $[C_{R_1}+C_{R_2}]\in\NE(g)$, $g$ should be of fiber type, a contradiction.
\end{proof} 
\begin{lemma}\label{paris}
  Let $X$ be a smooth Fano $4$-fold
with  $\delta_X\leq 2$, 
  and $g\colon X\to Z$ a contraction
 of fiber type. Then $\rho_Z\leq 4$.
 \end{lemma}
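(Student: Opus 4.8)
The plan is to exploit the defining property of the Lefschetz defect via a prime divisor sitting inside a fiber of $g$, and then compare Picard numbers along the fibration $g\colon X\to Z$. First I would pick a general fiber $F$ of $g$; since $g$ is of fiber type, $\dim F\geq 1$, and because $X$ is a Fano $4$-fold, $F$ is a Fano variety of dimension $1$, $2$, or $3$. The key observation is that $F$ is contained in a prime divisor $D\subset X$: if $\dim F=3$ then $F$ itself (or rather its support) is such a divisor; if $\dim F\leq 2$ one takes $D:=g^{-1}(W)$ for a general prime divisor $W\subset Z$ through $g(F)$, so that $D$ dominates $Z$ and $D\supset F$ for general $F$. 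By the push-pull relation for the fibration, curves in $X$ numerically trivial on every divisor pulled back from $Z$ are — up to the curves contracted by $g$ — detected inside a single fiber, so one gets the estimate $\rho_X-\rho_Z\leq \dim\N(F,X)$, and more usefully $\codim\N(D,X)\geq \rho_Z-\dim\N(F',X)$ for a suitable fiber-like piece; I would set this up so that $\codim\N(D,X)\geq \rho_Z - c$ where $c$ is a bound coming from the fiber.

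The heart of the argument is then: $\delta_X\leq 2$ forces $\codim\N(D,X)\leq 2$ for \emph{every} prime divisor $D$, so combining with the inequality above I get $\rho_Z\leq 2+c$. To make this give $\rho_Z\leq 4$ I need $c\leq 2$, i.e. I need the relevant numerical contribution of a fiber to be at most $2$-dimensional in the appropriate sense — this is where the low-dimensionality of $F$ (a Fano variety of dimension $\leq 3$) and a careful choice of $D$ enter. Concretely, when $\dim F=1$ then $F\cong\pr^1$ and contributes just one dimension; when $\dim F=2$, $F$ is a del Pezzo surface and I would instead use $D$ a pullback divisor whose intersection with the relevant cycles is controlled; when $\dim F=3$ one uses $D=F$ directly and must bound $\codim\N(F,X)$ from below by $\rho_Z-1$ using that the normal bundle of $F$ in $X$ is trivial (it is a fiber) together with the exact sequence relating $\N(F,X)$, $\N(X)$, and $g^*\Nu(Z)$.

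The cleanest route, which I would actually write, is the following chain. For $g\colon X\to Z$ of fiber type with general fiber $F$, one has the surjection $\N(X)\twoheadrightarrow \N(X)/\N(F,X)$ and the pullback $g^*\colon\Nu(Z)\hookrightarrow\Nu(X)$ induces, by duality, a surjection $\N(X)\to\N(Z)$ with kernel exactly $\N(F,X)$ when $F$ is general (so that $\rho_Z=\rho_X-\dim\N(F,X)$). Now choose a prime divisor $D$ with $F\subset D$: then $\N(F,X)\subseteq\N(D,X)$, hence $\codim\N(D,X)\leq\codim\N(F,X)=\rho_Z$; but $\delta_X\leq 2$ gives $\codim\N(D,X)\leq 2$... this goes the wrong way, so instead I use $\N(F,X)\subseteq\N(D,X)$ to write $\rho_Z=\codim\N(F,X)\geq\codim\N(D,X)$? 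That is also not automatic. The correct inequality comes from choosing $D$ so that $\N(D,X)$ is contained in $\N(F,X)$ plus one extra class — e.g. $D=g^*(W)$ with $W\subset Z$ a prime divisor: then every curve in $D$ either lies in a fiber or maps onto a curve of $W$, giving $\N(D,X)\subseteq\N(F,X)+g^*\N_1(W,Z)$, so $\codim\N(D,X)\geq \rho_Z-\dim\N_1(W,Z)\geq\rho_Z-\rho_W=\rho_Z-(\rho_Z-1)$... Let me record instead what I expect the real content to be: one picks $W\subset Z$ to be a \emph{prime divisor with $\codim\N_1(W,Z)$ as large as possible}, uses $\delta_Z$-type reasoning on $Z$ (or directly the structure of $Z$), and transfers it upstairs. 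I therefore expect the main obstacle to be exactly this bookkeeping: producing, from $\delta_X\leq 2$, the correct prime divisor $D\subset X$ whose numerical class span misses at least $\rho_Z-2$ dimensions — the geometry of general fibers of $g$ (Fano, dimension $\leq 3$) must be used to guarantee that the ``fiber contribution'' to $\N(D,X)$ never exceeds what is needed, and handling the case $\dim F=3$ separately (where $D=F$ and one invokes triviality of $\ol_X(F)_{|F}$) will be the delicate point.
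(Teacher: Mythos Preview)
Your proposal does not arrive at a proof; the bookkeeping you flag as ``the main obstacle'' is in fact the whole argument, and your choice of divisor is the wrong one. You keep selecting $D$ so that it \emph{contains} a general fiber (either $D=F$, or $D=g^{-1}(W)$ with $W\subset Z$ a prime divisor), which forces $g(D)$ to be large in $Z$; then the inequality $\codim\N(D,X)\geq\rho_Z-\dim\N(g(D),Z)$ gives nothing, exactly as you observe. The paper's approach is the opposite: pick $D$ so that its \emph{image} $g(D)\subsetneq Z$ is as small as possible. Concretely, one works with the pushforward $g_*\colon\N(X)\to\N(Z)$, whose kernel has dimension $\rho_X-\rho_Z$; since $g_*(\N(D,X))\subseteq\N(g(D),Z)$, one gets $\dim\N(D,X)\leq(\rho_X-\rho_Z)+\dim\N(g(D),Z)$, i.e.\ $\codim\N(D,X)\geq\rho_Z-\dim\N(g(D),Z)$. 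Combined with $\delta_X\leq 2$ this yields $\rho_Z\leq 2+\dim\N(g(D),Z)$.

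So the task is to produce a prime divisor $D\subset X$ with $\dim\N(g(D),Z)$ small, and this is done by a case split on $\dim Z$. If $\dim Z\leq 1$ there is nothing to prove. If $\dim Z=2$, any prime divisor $D$ with $g(D)\subsetneq Z$ has $g(D)$ a point or a curve, so $\dim\N(g(D),Z)\leq 1$ and $\rho_Z\leq 3$; such $D$ exists (e.g.\ a component of $g^{-1}(C)$ for $C\subset Z$ a curve). If $\dim Z=3$, one needs a prime divisor $D$ with $\dim\N(g(D),Z)\leq 2$; the paper cites \cite[proof of Cor.~1.6]{codim} for this. Your attempt to bound things via the Picard number of the \emph{fiber} $F$ (a del Pezzo surface when $\dim Z=2$, say) is a red herring: $\rho_F$ can be up to $9$, and $\dim\N(F,X)$ sits inside $\ker g_*$, so it controls $\rho_X-\rho_Z$, not $\rho_Z$.
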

 \begin{proof}
   This follows from \cite{codim}; for the reader's convenience we report the proof.
   
   If $\dim Z\leq 1$, then $\rho_Z\leq 1$. If $Z$ is a surface, take any prime divisor $D\subset X$ such that $g(D)\subsetneq Z$, namely $\dim g(D)\leq 1$.   
We have
$\dim\N(D,X)\leq \rho_X-\rho_Z+1$ by Remark \ref{pushforward}$(c)$, thus
$\codim\N(D,X)\geq \rho_Z-1$. Therefore $\delta_X\leq 2$ yields $\rho_Z\leq 3$.

Suppose now that
$\dim Z=3$. By \cite[Lemma 2.6]{fanos}
 we know that $Z$ has some
elementary contraction  $h\colon Z\to W$. If $\dim W\leq 2$, by applying the first part of the proof to $h\circ g\colon X\to W$, we get $\rho_W\leq 3$ and hence
$\rho_Z\leq 4$.

If $h$ is birational and divisorial, then  $\dim h(\Exc(h))\leq 1$,
and  Remark \ref{pushforward}$(c)$ yields
$\dim\N(\Exc(h),Z)\leq 2$. Moreover we can take a prime divisor  $D\subset X$ such that
$g(D)\subseteq\Exc(h)$, thus $\dim\N(g(D),Z)\leq 2$. Reasoning as above we conclude that $\codim\N(D,X)\geq \rho_Z-2$ and $\rho_Z\leq 4$.

Finally we assume that $h$ is birational and small. Then $\Exc(h)$ is a curve in $Z$, and $\N(\Exc(h),Z)=\R\NE(h)$ is one-dimensional.
We show that there exists a prime divisor $D\subset X$ such that
$g(D)\subseteq\Exc(h)$.  
We consider the lifting of $h$
 in $X$ (see
 \cite[\S 2.5]{fanos}), which is an elementary contraction $h'\colon X\to W'$ fitting into a commutative diagram:
 $$\xymatrix{
   X\ar[d]_g\ar[r]^{h'}&{W'}\ar[d]\\
   Z\ar[r]^h&W
   }$$
and such that $g_*(\NE(h'))=\NE(h)$. If $F\subset X$ is a non-trivial fiber of $h'$, then $g$ must be finite on $F$ and $g(F)\subseteq\Exc(h)$. This implies that 
 $h'$ is a $K$-negative birational elementary contraction
with fibers of dimension $\leq 1$, therefore it must be
 of type $(3,2)$ (see \cite[Theorem 1.2]{wisn}); let $D$ be its exceptional
 divisor. Then $g(D)\subseteq\Exc(h)$.

Hence
 $\dim\N(g(D),Z)=1$, and reasoning as above we get $\codim\N(D,X)\geq \rho_Z-1$ and $\rho_Z\leq 3$.
\end{proof}
   \begin{lemma}[\cite{blowup}, Remark 2.17(1)]\label{sabri}
    Let $X$ be a smooth Fano $4$-fold. If $X$ has a divisorial elementary  contraction not of type $(3,2)$, then $\rho_X\leq 5$.
  \end{lemma}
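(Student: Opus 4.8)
The plan is to bound the dimension of $\N(E,X)$, where $E:=\Exc(f)$, and then to extract the bound on $\rho_X$ from the definition of the Lefschetz defect together with Th.~\ref{codim}. Write $S:=f(E)\subseteq Y$. Since $f$ is divisorial and not of type $(3,2)$, and since $\dim E=3$, we have $\dim S\le 1$, hence $\dim\N(S,Y)\le 1$. The exceptional locus $E$ of the elementary contraction $f$ is a prime divisor. The pushforward $f_*\colon\N(X)\to\N(Y)$ is surjective, and as $f$ is elementary its kernel has dimension $\rho_X-\rho_Y=1$; moreover $f$ sends every curve contained in $E$ to a point or to a curve inside $S$, so $f_*(\N(E,X))\subseteq\N(S,Y)$. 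Therefore
\[
\dim\N(E,X)\ \le\ \dim\ker f_*+\dim\N(S,Y)\ \le\ 2 .
\]
Since $E$ is a prime divisor, the definition of $\delta_X$ gives $\delta_X\ge\codim\N(E,X)=\rho_X-\dim\N(E,X)\ge\rho_X-2$, that is, $\rho_X\le\delta_X+2$.

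It then remains to prove $\delta_X\le 3$, and by Th.~\ref{codim} it suffices to check that $X$ is \emph{not} a product of two del Pezzo surfaces. If $X=S_1\times S_2$, then the Künneth decomposition $\N(X)=\N(S_1)\oplus\N(S_2)$ yields $\overline{\NE}(X)=\overline{\NE}(S_1)\oplus\overline{\NE}(S_2)$, so every extremal ray of $X$ is, up to exchanging the two factors, of the form $R_1\oplus\{0\}$ with $R_1$ an extremal ray of $S_1$, and its associated contraction is $\ph\times\Id_{S_2}$, where $\ph\colon S_1\to T$ is the contraction of $R_1$. If $\ph$ is of fiber type, so is $\ph\times\Id_{S_2}$; if $\ph$ is the blow-down of a $(-1)$-curve $e\subseteq S_1$ to a point, then $\ph\times\Id_{S_2}$ contracts $e\times S_2$ onto $\{pt\}\times S_2$, a surface, hence is of type $(3,2)$. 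Thus a product of del Pezzo surfaces has no divisorial elementary contraction that is not of type $(3,2)$, contradicting the hypothesis on $X$. Hence $X$ is not such a product, so $\delta_X\le 3$ by (the contrapositive of) Th.~\ref{codim}, and combined with $\rho_X\le\delta_X+2$ we conclude $\rho_X\le 5$.

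The only point requiring a little care is the inequality $\dim\N(E,X)\le 2$: a priori, reducible or special fibres of $f_{|E}$ could contribute extra numerical classes supported on $E$, but all such classes are contracted by $f$ and therefore lie in the single extremal ray $\NE(f)$ — which is precisely what makes the elementary estimate for $\ker f_*$ above sufficient. Everything else is routine bookkeeping with the Lefschetz defect once Th.~\ref{codim} is granted; this recovers the statement as quoted from \cite{blowup}.
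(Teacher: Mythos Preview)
Your argument is correct. The paper does not actually prove this lemma here; it merely cites \cite[Rem.~2.17(1)]{blowup}, so there is no in-paper proof to compare against. Your route---bounding $\dim\N(E,X)\le 2$ via the pushforward and then invoking Th.~\ref{codim} to cap $\delta_X$ at $3$---is a clean self-contained derivation, and it is logically sound since Th.~\ref{codim} (from \cite{codim}, 2012) predates \cite{blowup} (2017), so there is no circularity. The check that a product of del Pezzo surfaces admits only fiber-type or $(3,2)$ elementary contractions is correct: the decomposition $\overline{\NE}(S_1\times S_2)=\overline{\NE}(S_1)\times\overline{\NE}(S_2)$ holds for products, and every birational extremal contraction of a del Pezzo surface is the blow-down of a $(-1)$-curve. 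One very minor remark: in your dimension estimate you could note that $\ker f_*$ is in fact \emph{contained} in $\N(E,X)$ (since it is spanned by the class of a fibre of $f_{|E}$), so the inequality $\dim\N(E,X)\le\dim\ker f_*+\dim\N(S,Y)$ is an equality on the left-hand contribution; but this does not affect the bound.
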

\section{Showing that $S$ is a del Pezzo surface}\label{dp}  
\noindent In this section we study elementary contractions of type $(3,2)$ of a Fano $4$-fold. We focus on the surface $S$ which is the image of the exceptional divisor; as explained in the Introduction, our goal is to show that under suitable assumptions, $S$ is a smooth del Pezzo surface.

Recall that $S$ has isolated singularities by Theorem \ref{singularities}.
\begin{proposition}\label{resolution}
 Let $X$ be a smooth Fano $4$-fold and $f\colon X\to Y$ an elementary  contraction of type $(3,2)$.
Set $E:=\Exc(f)$ and $S:=f(E)$, and assume that $\dim\N(E,X)\geq 4$.

Let $\mu\colon S'\to S$ be the minimal resolution of singularities, and set $L:=\mu^*((-K_Y)_{|S})$. Then $K_{S'}+L$ is semiample.

If moreover $K_{S'}+L\equiv 0$, then  $S$ is a smooth del Pezzo surface, and $-K_S=(-K_Y)_{|S}$.
\end{proposition}
\begin{proof}
Note that $-K_Y$ is Cartier by Theorem \ref{singularities}, and ample by Lemma \ref{Fanotarget}, 
so that $L$ is nef and big on $S'$, and for every irreducible curve $\Gamma\subset S'$, we have $L\cdot\Gamma=0$ if and only if $\Gamma$ is $\mu$-exceptional.

Consider the pushforward of one-cycles $f_*\colon \N(X)\to\N(Y)$. Then $f_*(\N(E,X))=\N(S,Y)$, therefore $\rho_{S'}\geq\rho_S\geq\dim\N(S,Y)\geq 3$.

Recall that by the Cone Theorem we have:
$$\overline{\NE}(S')=\overline{\NE}(S')_{K_{S'}\geq 0}+\sum_i R_i $$
where $R_i$ are  the $K_{S'}$-negative extremal rays of $\overline{\NE}(S')$ (and they are at most countably many). 
We show that $K_{S'}+L$ is nef; for this it is enough to show that it is non-negative on each summand.

Since $L$ is nef, if $\gamma\in\overline{\NE}(S')_{K_{S'}\geq 0}$, we have
$(K_{S'}+L)\cdot\gamma=K_{S'}\cdot\gamma+L\cdot\gamma\geq 0$.

Suppose now that $\overline{\NE}(S')$ has  a $K_{S'}$-negative extremal ray
$R$. The contraction associated to $R$ can be onto a point (if $S'\cong\pr^2$), onto a curve (so that $\rho_{S'}=2$), or the blow-up of a smooth point
(see for instance \cite[Theorem 1-4-8]{matsuki}). Since
$\rho_{S'}>2$, $R$ 
 is generated by the class of a $(-1)$-curve $\Gamma$, that cannot be
  $\mu$-exceptional,  because $\mu$ is minimal. Then $L\cdot\Gamma>0$ and $(K_{S'}+L)\cdot\Gamma=L\cdot\Gamma-1\geq 0$.

  We conclude that $K_{S'}+L$ is nef on $S'$, and also semiample by the Base-Point-Free Theorem.

\medskip
  
  We assume now that
$K_{S'}+L\equiv 0$.
In particular $-K_{S'}$ is nef and big, namely $S'$ is a weak del Pezzo surface.

Set for simplicity $\ma{F}:=\ol_{Y}(K_{Y})_{|S}$, invertible sheaf on $S$, and let $\omega_S$ be the dualizing sheaf of $S$.
We have
 $K_{S'}\equiv \mu^*(\ma{F})$, and since $S'$ is rational, we also have $\ol_{S'}(K_{S'})\cong\mu^*(\ma{F})$.
 By restricting to the open subset $\mu^{-1}(S_{\text{\em reg}})$, we conclude that $(\omega_S)_{|S_{\text{\em reg}}}\cong \ma{F}_{|S_{\text{\em reg}}}$.
 Now we use the following. 
 \begin{lemma}\label{dualizing}
Let $S$ be a reduced and irreducible projective surface with isolated singularities, and $\omega_S$ its dualizing sheaf. If there exists an invertible sheaf $\ma{F}$ on $S$ such that $(\omega_S)_{|S_{\text{reg}}}\cong \ma{F}_{|S_{\text{reg}}}$, then $S$ is normal and $\omega_S\cong \ma{F}$.
\end{lemma}
This should be well-known to experts, we include a proof for lack of references.
 We postpone the proof of Lemma \ref{dualizing} and carry on with the proof of Proposition \ref{resolution}.
 
By Lemma \ref{dualizing} we have that $S$ is normal and  $\omega_S\cong \ma{F}$, in particular $\omega_S$ is locally free. If $y_0$ is a singular point of $S$, then 
 by Theorem \ref{singularities} $y_0$ is a singularity of type $\frac{1}{3}(1,1)$, but this contradicts the fact that  $\omega_S$ is locally free. We conclude that $S$ is smooth, and finally that $-K_S=(-K_{Y})_{|S}$ is ample, so that $S$ is a del Pezzo surface.
\end{proof}
\begin{remark}
  In the setting of Proposition \ref{resolution}, when $K_{S'}+L\equiv 0$ we cannot conclude that $Y$ is smooth. A priori $Y$ could have isolated singularities at some $y_0\in S$; by \cite{AW} in this case  $f^{-1}(y_0)\cong\pr^2$.
\end{remark}
\begin{proof}[Proof of Lemma \ref{dualizing}]
  Recall that $S$ has isolated singularities. 
The surface $S$ is reduced, thus it satisfies condition ($S_1$), namely $$\depth\ol_{S,y}\geq 1\quad\text{for every }y\in S.$$ Then by \cite[Lemma 1.3]{hart2007} the dualizing sheaf $\omega_S$ satisfies condition 
   ($S_2$):
   $$\depth\omega_{S,y}\geq 2\quad\text{for every }y\in S,$$
   where $\depth\omega_{S,y}$ is the depth of the stalk $\omega_{S,y}$ as an $\ol_{S,y}$-module.
   
 Then, for every open subset $U\subset S$ such that $S\smallsetminus U$ is finite, we have $\omega_S=j_*((\omega_S)_{|U})$, where $j\colon U\hookrightarrow S$ is the inclusion. This is analogous to the properties of reflexive sheaves on normal varieties, see \cite[Propositions 1.3 and 1.6]{hartrefl} and \cite[Remark 1.8]{hart2007}; for the reader's convenience, we recall
 the proof using local cohomology \cite{localcohomology}.
   
Set $\{y_1,\dotsc,y_m\}:=S\smallsetminus U$.
We have $\depth_{\{y_1,\dotsc,y_m\}}\omega_S:=\min_i\depth \omega_{S,y_i}\geq 2$ \cite[p.~43]{localcohomology}. By \cite[Theorem 3.8]{localcohomology} this is equivalent to  $\underline{H}^i_{\{y_1,\dotsc,y_m\}}(\omega_S)=0$ for $i=0,1$, where $\underline{H}^i_{\{y_1,\dotsc,y_m\}}(\omega_S)$ is the $i$th local cohomology sheaf of $S$ with coefficients in $\omega_S$ and supports in $\{y_1,\dotsc,y_m\}$ \cite[\S 1]{localcohomology}, in particular
 $\underline{H}^0_{\{y_1,\dotsc,y_m\}}(\omega_S)$ is the subsheaf of $\omega_S$ of sections with support contained in $\{y_1,\dotsc,y_m\}$.
There is an exact sequence of sheaves:
$$0\la \underline{H}^0_{\{y_1,\dotsc,y_m\}}(\omega_S)\la \omega_S \la j_*\bigl((\omega_S)_{|U}\bigr)\la
\underline{H}^1_{\{y_1,\dotsc,y_m\}}(\omega_S)\la 0$$
\cite[Corollary 1.9]{localcohomology},
hence $\underline{H}^i_{\{y_1,\dotsc,y_m\}}(\omega_S)=0$ for $i=0,1$
is in turn equivalent to 
$\omega_S=j_*((\omega_S)_{|U})$.

For $U=S_{\text{\em reg}}$ we have 
$\omega_S=j_*((\omega_S)_{|S_{\text{\em reg}}})$. Since 
 $\ma{F}$ is locally free, we get
$$\omega_S=j_*\bigl((\omega_S)_{|S_{\text{\em reg}}}\bigr)\cong j_*( \ma{F}_{|S_{\text{\em reg}}} )=\ma{F},$$
in particular $\omega_S$ is an invertible sheaf and for every $y\in Y$ we have $\omega_{S,y}\cong\ol_{S,y}$ as an $\ol_{S,y}$-module, thus $\depth\ol_{S,y}=2$. Therefore  $S$ has property $(S_2)$, and  it is normal by Serre's criterion.
\end{proof}
\begin{proposition}\label{oneray}
 Let $X$ be a smooth Fano $4$-fold and $f\colon X\to Y$ an elementary  contraction of type $(3,2)$.
Set $E:=\Exc(f)$ and $S:=f(E)$, and assume that $\dim\N(E,X)\geq 4$.
Let $\mu\colon S'\to S$ be the minimal resolution of singularities, and set $L:=\mu^*((-K_Y)_{|S})$.

  Suppose that $X$ has an extremal ray $R_1$ of type $(3,2)$ such that:
  $$E\cdot R_1=0\quad\text{and}\quad E\cap E_{R_1}\neq\emptyset.$$ Set $D:=f(E_{R_1})\subset Y$.
  
  Then $D_{|S}=C_1+\cdots+C_r$ where $C_i$ are pairwise disjoint $(-1)$-curves contained in $S_{\text{reg}}$, $E_{R_1}=f^*(D)$, and $f_*(C_{R_1})\equiv_Y C_i$.
Moreover  if $C_i'\subset S'$ is the transform of $C_i$, we have $(K_{S'}+L)\cdot C_i'=0$ for every $i=1,\dotsc,r$.
\end{proposition}
\stepcounter{thm}
\begin{figure}\caption{The varieties in Proposition \ref{oneray}.}\label{figura1}

\bigskip
  
  \input{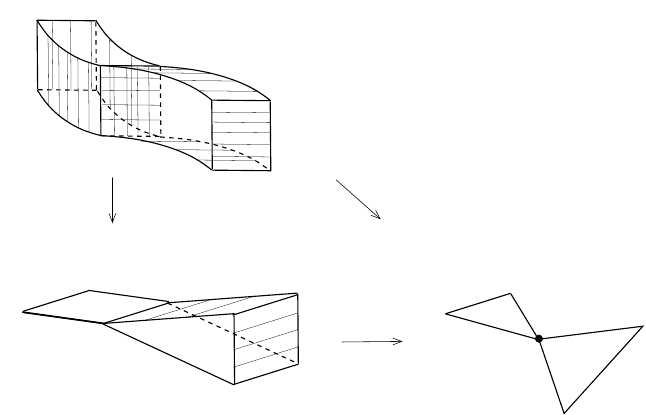tex_t}
\end{figure}
\begin{proof}
  By Lemma \ref{fabri} we have
  $E_{R_1}\cdot \NE(f)=0$ and $\NE(f)+R_1$ is
 a face of $\NE(X)$, whose associated   contraction $h\colon X\to Z$ is birational with $\Exc(h)=E\cup E_{R_1}$.
 We have a diagram (see Figure \ref{figura1}):
 \stepcounter{thm}
 \begin{equation}\label{g}
  \xymatrix{ X\ar[d]_f\ar[dr]^h&\\
    Y\ar[r]_g&Z}
  \end{equation}
where $g$ is an elementary, $K$-negative, divisorial contraction, with $\Exc(g)=D$ (recall that $Y$ is  is locally factorial by Theorem \ref{singularities}, and Fano by Lemma \ref{Fanotarget}).

Since $E_{R_1}\cdot \NE(f)=E\cdot R_1=0$, $E\cap E_{R_1}$ is both a union of fibers of $f$ and of fibers of the contraction of $R_1$. This implies that
$\dim f(E\cap E_{R_1})\leq 1$, that $\N(E\cap E_{R_1})=\ker f_* \oplus\R R_1=\ker h_*$, and that $\dim h(E\cap E_{R_1})=0$.

We also note that both $E$ and $E_{R_1}$ have non-positive intersection  with every irreducible curve contracted by $h$, thus they are both unions of fibers of $h$, and
$$E\cap E_{R_1}=h^{-1}\bigl(h(E)\bigr)\cap h^{-1}\bigl(h(E_{R_1})\bigr)=h^{-1}\bigl(h(E)\cap h(E_{R_1})\bigr).$$ 
Both $h(E)$ and $h(E_{R_1})$ are surfaces in $Z$, and the general fiber of $h$ over these surfaces is one-dimensional. Moreover $h(E)\cap h(E_{R_1})=h(E\cap E_{R_1})$ is finite, and the connected components of  $E\cap E_{R_1}$ are $2$-dimensional fibers of $h$ over these points.

Using the classification of the possible $2$-dimensional fibers of $h$ in \cite{AW}, as in \cite[Lemma 4.15]{small} we see that every connected component $T_i$ of $E\cap E_{R_1}$ (which is non-empty by assumption) is isomorphic to $\pr^1\times\pr^1$ with normal bundle $\ol(-1,0)\oplus\ol(0,-1)$, for $i=1,\dotsc,r$. Set $C_i:=f(T_i)$, so that $D\cap S=f(E\cap E_{R_1})=f(\cup_i T_i)=\cup_iC_i$. Then
$C_i\cong\pr^1$, $C_i\cap C_j=\emptyset$ if $i\neq j$, and $f$ has fibers of dimension one over $C_i$, therefore $C_i\subset S_{\text{\em reg}}$ and $C_i\subset Y_{\text{\em reg}}$ by Theorem \ref{singularities}.

Moreover $g(D)=h(E_{R_1})$ is a surface, namely $g$ is of type $(3,2)$, and $C_i$ is a one-dimensional fiber of $g$ contained in $Y_{\text{\em reg}}$, hence  $K_Y\cdot C_i=D\cdot C_i=-1$. We also have $E_{R_1}=f^*(D)$ and $f_*(C_{R_1})\equiv_Y C_i$.

Since $C_i\subset S_{\text{\em reg}}$, it is a Cartier divisor in $S$, and we can write 
$D_{|S}=m_1C_1+\cdots+m_rC_r$ with $m_i\in\Z_{>0}$ for every $i=1,\dotsc,r$.
In $S$ we have $C_i\cdot C_j=0$ for $i\neq j$, hence for  $i\in\{1,\dotsc,r\}$ we get
$$-1=D\cdot C_i=(m_1C_1+\cdots+m_rC_r)\cdot C_i=m_i C_i^2$$
and we conclude that $m_i=1$ and $C_i^2=-1$, so that $C_i$ is a $(-1)$-curve in $S$.

 Finally $-K_S\cdot C_i=-K_Y\cdot C_i=1$, hence if $C_i'\subset S'$ is the transform of $C_i$, we have $(K_{S'}+L)\cdot C_i'=0$.
\end{proof}  
\begin{corollary}\label{torino}
 Let $X$ be a smooth Fano $4$-fold and $f\colon X\to Y$ an elementary  contraction of type $(3,2)$.
Set $E:=\Exc(f)$, and assume that $\dim\N(E,X)\geq 4$.
  Suppose that $X$ has an extremal ray $R_1$ of type $(3,2)$ such that $E\cdot R_1=0$.

  Then $R_1':=f_*(R_1)$ is an extremal ray of $Y$ of type $(3,2)$, and $E_{R_1}=f^*(E_{R_1'})$.
\end{corollary}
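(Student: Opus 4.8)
The plan is to realize $g$ as the contraction of the ray $R_1':=f_*(R_1)$ in the factorization furnished by Lemma~\ref{fabri}, and then to prove that $g$ is of type $(3,2)$; for the latter the case $E\cap E_{R_1}\neq\emptyset$ will be covered by Lemma~\ref{oneray}, while in the case $E\cap E_{R_1}=\emptyset$ I would compare $g$ directly with the type-$(3,2)$ contraction of $R_1$ on $X$.

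Set $R:=\NE(f)$. Since $\dim\N(E,X)\geq 4$ and $E\cdot R_1=0$, Lemma~\ref{fabri} applied to $R$ and $R_1$ gives $E_{R_1}\cdot R=0$ and shows that $R+R_1$ is a face of $\NE(X)$ whose associated contraction $h\colon X\to Z$ is birational, with $\Exc(h)=E\cup E_{R_1}$; as $R\subset R+R_1$, $h$ factors as $h=g\circ f$ through a contraction $g\colon Y\to Z$ (diagram~\eqref{g}). Now $\rho_Z=\rho_X-2=\rho_Y-1$, so $g$ is elementary, and $K$-negative since $Y$ is Fano by Lemma~\ref{Fanotarget}; being $h\circ f^{-1}$, it is birational. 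Let $g_1\colon X\to Y_1$ be the contraction of $R_1$, which is of type $(3,2)$ by hypothesis, so $E_{R_1}$ is swept out by curves of class in $R_1$; their $f$-images sweep out $D:=f(E_{R_1})$ (a prime divisor of $Y$, since $E_{R_1}\cdot R_1<0=E\cdot R_1$ forces $E_{R_1}\neq E$) and have class in $f_*(R_1)$. Hence $D\subset\Exc(g)$ and these covering curves are contracted by $g$, so $g$ is not small, thus divisorial with $\Exc(g)=D$ and $\dim g(D)\leq 2$. Finally $\NE(g)=f_*(\NE(h))=f_*(R+R_1)=f_*(R_1)=R_1'$ (as $f$ contracts the curves of $R$), so $R_1'$ is an extremal ray of $Y$ with $E_{R_1'}=D$. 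It remains to prove $\dim g(D)=2$ and $E_{R_1}=f^*(D)$.

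For the equality $E_{R_1}=f^*(D)$: since $f$ is birational and $D=f(E_{R_1})$, one has $f^*(D)=E_{R_1}+aE$ with $a\in\Z_{\geq 0}$; intersecting with $C_R$ (a general fiber of $f_{|E}$, so $E\cdot C_R=-1$) and using $f^*(D)\cdot C_R=D\cdot f_*(C_R)=0$ and $E_{R_1}\cdot C_R=0$ (from $E_{R_1}\cdot R=0$), we get $a=0$. For the type-$(3,2)$ property we must show $\dim g(D)=2$, hence it suffices to exclude $\dim g(D)\leq 1$. If $E\cap E_{R_1}\neq\emptyset$, then $f$ and $R_1$ satisfy all the hypotheses of Lemma~\ref{oneray}, whose proof shows precisely that $g(D)=h(E_{R_1})$ is a surface. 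If instead $E\cap E_{R_1}=\emptyset$, put $S:=f(E)$; then $f$ is an isomorphism over $Y\setminus S$, and one checks successively that $D\cap S=\emptyset$, that $g^{-1}(h(E))=S$ and $g_1^{-1}(g_1(E))=E$, and that a curve contained in $X\setminus E$ is $h$-contracted if and only if its class lies in $R_1$ (being disjoint from $E$, such a curve has trivial $R$-component in the face $R+R_1$). It follows that $f$ identifies $g$ over $Z\setminus h(E)$ with $g_1$ over $Y_1\setminus g_1(E)$, whence $g(D)\cong g_1(E_{R_1})$ is a surface, as required.

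The step I expect to be the main obstacle is this last one: showing $\dim g(D)=2$ when $E\cap E_{R_1}=\emptyset$. This requires bookkeeping the pairwise disjoint exceptional loci of $f$, $g$ and $g_1$ and verifying that, over $X\setminus E$, the contractions $h$ and $g_1$ kill exactly the same curves, so that $g$ is, near $D$, a copy of the type-$(3,2)$ contraction $g_1$ of $R_1$. Everything else is a formal consequence of Lemmas~\ref{fabri}, \ref{Fanotarget} and \ref{oneray}.
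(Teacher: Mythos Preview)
Your argument is correct and follows essentially the same route as the paper: split into the two cases $E\cap E_{R_1}\neq\emptyset$ and $E\cap E_{R_1}=\emptyset$, invoke the diagram~\eqref{g} coming from Lemma~\ref{fabri}, use Lemma~\ref{oneray} in the first case, and in the second case observe that $g$ is, near $D$, a copy of the type-$(3,2)$ contraction $g_1$ of $R_1$. The paper's proof is simply terser, stating in the disjoint case that ``$g$ is locally isomorphic to the contraction of $R_1$ in $X$, and the statement is clear''; you have written out exactly what this means.
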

\begin{proof}
  If $E\cap E_{R_1}\neq\emptyset$, we are in the setting of Proposition \ref{oneray}; consider the elementary contraction $g\colon Y\to Z$ as in \eqref{g}. Then $\NE(g)=f_*(R_1)=R_1'$ is an extremal ray of $Y$ of type $(3,2)$, and $f^*(E_{R_1'})=E_{R_1}$.

   If $E\cap E_{R_1}=\emptyset$, then we still have a diagram as \eqref{g}, where $g$ is locally isomorphic to the contraction of $R_1$ in $X$, and the statement is clear.
\end{proof}  
\begin{proposition}\label{tworays}
 Let $X$ be a smooth Fano $4$-fold and $f\colon X\to Y$ an elementary  contraction of type $(3,2)$.
 Set $E:=\Exc(f)$ and $S:=f(E)$, and assume that $\dim\N(E,X)\geq 4$.
  Let $\mu\colon S'\to S$ be the minimal resolution of singularities, and set $L:=\mu^*((-K_Y)_{|S})$.

Suppose that $X$ has two extremal rays
  $R_1,R_2$ of type $(3,2)$  such that:
  $$E_{R_1}\cdot R_2>0 \text{ and } E\cdot R_i=0,\ E\cap E_{R_i}\neq\emptyset\ \text{for } i=1,2.$$
 
  Then one of the following holds:
  \begin{enumerate}[$(i)$]
     \item
     $K_{S'}+L\equiv 0$;
     \item there is a contraction $g\colon S'\to B$ with $\dim B=1$ such that $\NE(g)=(K_{S'}+L)^{\perp}\cap\overline{\NE}(S')$, and $E_{R_1}\cdot C_{R_2}=E_{R_2}\cdot C_{R_1}=1$. 
\end{enumerate}
\end{proposition}
\begin{proof}
We apply Proposition \ref{oneray}  to $f,R_1$ and to $f,R_2$. Write
   $f(E_{R_1})_{|S}=C_1+\cdots+C_r$, and let $\Gamma_2$ be an irreducible component of $f(E_{R_2})_{|S}$, so that $C_1,\dotsc,C_r,\Gamma_2$ are $(-1)$-curves contained in $S_{\text{\em reg}}$, and $\Gamma_2\equiv f_*(C_{R_2})$. Then
\stepcounter{thm}
   \begin{equation}\label{formula}
  0<E_{R_1}\cdot C_{R_2}=f^*(f(E_{R_1}))\cdot C_{R_2}=f(E_{R_1})\cdot \Gamma_2=(C_1+\cdots+C_r)\cdot \Gamma_2,\end{equation}
 hence $C_i\cdot \Gamma_2>0$ for some $i$, say $i=1$. Since $C_1$ cannot be a component of $f(E_{R_2})_{|S}$, we also get $E_{R_2}\cdot C_{R_1}=f(E_{R_2})_{|S}\cdot C_1\geq \Gamma_2\cdot C_1>0$.

 Let  $\Gamma_2'$ and $C_1'$ in $S'$ be the transforms of $\Gamma_2$ and $C_1$ respectively;  then $\Gamma_2'$ and $C_1'$ are disjoint from the $\mu$-exceptional locus, are $(-1)$-curves in $S'$,
$C_1'\cdot \Gamma_2'>0$, and still by  Proposition \ref{oneray} we have
 $(K_{S'}+L)\cdot C_1'=(K_{S'}+L)\cdot\Gamma_2'=0$.

 Recall that  $K_{S'}+L$ is semiample by Proposition \ref{resolution}. In particular, the face $(K_{S'}+L)^{\perp}\cap\overline{\NE}(S')$ contains the classes of two distinct $(-1)$-curves which meet. This means that the associated contraction cannot be birational, and we have two possibilities: either $K_{S'}+L\equiv 0$, or  $K_{S'}+L$ yields a contraction $g\colon S'\to B$ onto a smooth curve. We show that this second case yields
$(ii)$.

 Let $F\subset S'$ be a general fiber $F$ of $g$, so that $-K_{S'}\cdot F=L\cdot F$. Since $F$
is not $\mu$-exceptional, we have $L\cdot F>0$ and hence 
$-K_{S'}\cdot F>0$. Thus there is a non-empty open subset $B_0\subseteq B$ such that $(-K_{S'})_{|g^{-1}(B_0)}$ is $g$-ample,  therefore
$g_{|g^{-1}(B_0)}\colon g^{-1}(B_0)\to B_0$ is a conic bundle,
$F\cong\pr^1$, and $-K_{S'}\cdot F=2$.

 The curves
 $C'_1$ and $\Gamma_2'$ are components of the same fiber $F_0$ of $g$, and $-K_{S'}\cdot F_0=2=-K_{S'}\cdot (C_1'+\Gamma_2')$.
 For any irreducible curve $C_0$ contained in $F_0$ we have $-K_{S'}\cdot C_0=L\cdot C_0\geq 0$, so that if $C_0$ is different from  $C_1'$ and $\Gamma_2'$, we must have $-K_{S'}\cdot C_0=L\cdot C_0= 0$ and $C_0$ is $\mu$-exceptional.  But $C_1'$ and $\Gamma_2'$ are disjoint from the $\mu$-exceptional locus, 
thus $C_0\cap (C_1'\cup\Gamma_2')=\emptyset$. Since $F_0$ is connected, we conclude that
  $F_0= C_1'+\Gamma_2'$ and $F_0\subset g^{-1}(B_0)$, hence $F_0$ is isomorphic to a reducible conic.

  This also shows that $C'_i$ for $i>1$ are  contained in different fibers of $g$, so that
$$ C_1\cdot \Gamma_2=\Gamma_2\cdot C_1=1\quad\text{and}\quad C_i\cdot \Gamma_2=0\quad \text{for every }i=2,\dotsc,r,$$ and finally using \eqref{formula}
$$E_{R_1}\cdot C_{R_2}=(C_1+\cdots+C_r)\cdot \Gamma_2=1.$$
Similarly we conclude that $E_{R_2}\cdot C_{R_1}=1$.
\end{proof}  
\begin{proposition}\label{threerays}
Let $X$ be a smooth Fano $4$-fold and $f\colon X\to Y$ an elementary  contraction of type $(3,2)$.
Set $E:=\Exc(f)$ and $S:=f(E)$, and assume that $\dim\N(E,X)\geq 4$.

Suppose that $X$ has three distinct extremal rays
  $R_1,R_2,R_3$ of type $(3,2)$ such that: 
  $$E\cdot R_i=0,\ E\cap E_{R_i}\neq\emptyset\ \text{for } i=1,2,3, \text{ and } E_{R_1}\cdot R_j>0\ \text{for } j=2,3.$$
  
  Then 
 $S$ is a smooth del Pezzo surface and $-K_S=(-K_Y)_{|S}$.
\end{proposition}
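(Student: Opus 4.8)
The plan is to apply Theorem~\ref{tworays} twice, to $f,R_1,R_2$ and to $f,R_1,R_3$ (in both cases its hypotheses hold by assumption). If either application yields conclusion $(i)$ we are done, so we may assume both yield conclusion $(ii)$, namely $E_{R_1}\cdot C_{R_2}=E_{R_2}\cdot C_{R_1}=1$ and $E_{R_1}\cdot C_{R_3}=E_{R_3}\cdot C_{R_1}=1$. Let $\mu\colon S'\to S$ be the minimal resolution and $L:=\mu^*((-K_Y)_{|S})$. Applying Lemma~\ref{oneray} to $f$ with each of $R_1,R_2,R_3$ gives decompositions $f(E_{R_i})_{|S}=\sum_jC^{(i)}_j$ into pairwise disjoint $(-1)$-curves in $S_{\text{\em reg}}$, with $E_{R_i}=f^*(f(E_{R_i}))$, $f_*(C_{R_i})\equiv_Y C^{(i)}_j$ for all $j$, and $(K_{S'}+L)\cdot(C^{(i)}_j)'=0$ for the transforms in $S'$. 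Since $f_*(C_{R_1})\equiv_Y C^{(1)}_j$ and $E_{R_2}=f^*(f(E_{R_2}))$, for each $j$ we get $1=E_{R_2}\cdot C_{R_1}=f(E_{R_2})\cdot C^{(1)}_j=\bigl(\sum_kC^{(2)}_k\bigr)\cdot C^{(1)}_j$ (note $C^{(1)}_j\neq C^{(2)}_k$, as they generate the distinct rays $f_*(R_1)\neq f_*(R_2)$ of $\NE(Y)$, cf.\ Corollary~\ref{torino}), so every $C^{(1)}_j$ meets exactly one component of $f(E_{R_2})_{|S}$, with intersection $1$; likewise for $f(E_{R_3})_{|S}$. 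Fix $C:=C^{(1)}_1$ and let $\Gamma_2,\Gamma_3$ be the components of $f(E_{R_2})_{|S}$ and $f(E_{R_3})_{|S}$ meeting $C$, so $C\cdot\Gamma_2=C\cdot\Gamma_3=1$. These three $(-1)$-curves are pairwise distinct: $C\neq\Gamma_i$ since $C^2=-1\neq 1=C\cdot\Gamma_i$; and if $\Gamma_2=\Gamma_3$, then $f_*(C_{R_2})\equiv_Y f_*(C_{R_3})$, so $[C_{R_2}]-[C_{R_3}]\in\ker f_*=\R[C_{\NE(f)}]$, which, since $\NE(f)+R_2$ and $\NE(f)+R_3$ are $2$-dimensional faces of $\NE(X)$ by Lemma~\ref{fabri}, forces $R_2=R_3$ --- a contradiction.

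By Lemma~\ref{resolution}, $K_{S'}+L$ is semiample; let $\phi\colon S'\to W$ be the associated contraction, with $W$ normal and projective. The transforms $C',\Gamma_2',\Gamma_3'\subset S'$ are three distinct $(-1)$-curves, disjoint from the $\mu$-exceptional locus, with $(K_{S'}+L)\cdot C'=(K_{S'}+L)\cdot\Gamma_2'=(K_{S'}+L)\cdot\Gamma_3'=0$, $-K_{S'}\cdot C'=-K_{S'}\cdot\Gamma_2'=-K_{S'}\cdot\Gamma_3'=1$, and $C'\cdot\Gamma_2'=C'\cdot\Gamma_3'=1$; in particular all three are contracted by $\phi$. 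If $\dim W=2$, then $K_{S'}+L$ is nef and big, so by the Hodge index theorem the intersection form is negative definite on $(K_{S'}+L)^{\perp}\subset\Nu(S')$; but $[C']$ and $[\Gamma_2']$ lie in this hyperplane, are linearly independent (else $[C']=-[\Gamma_2']$, impossible for effective curves), and span a plane on which the form has $(C')^2=(\Gamma_2')^2=-1$ and $C'\cdot\Gamma_2'=1$, hence determinant $0$ --- impossible. If $\dim W=1$, then, exactly as in the proof of Theorem~\ref{tworays}, $\phi$ is a conic bundle over a dense open subset of $W$; since $\Gamma_2'$ and $\Gamma_3'$ each meet $C'$, all three curves lie in a single fibre $F$, which is numerically equivalent to a general fibre, so $-K_{S'}\cdot F=2$; on the other hand $C'+\Gamma_2'+\Gamma_3'\leq F$ and $-K_{S'}$ is nonnegative on every component of $F$ (there $K_{S'}+L$ is trivial and $L$ is nef), so $-K_{S'}\cdot F\geq 3$ --- again impossible. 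Hence $\dim W=0$, i.e.\ $K_{S'}+L\equiv 0$, and arguing exactly as in the case $K_{S'}+L\equiv 0$ of the proof of Theorem~\ref{tworays} (via Lemma~\ref{dualizing} and Theorem~\ref{singularities}) we conclude that $S$ is a smooth del Pezzo surface with $-K_S=(-K_Y)_{|S}$.

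I expect the main obstacle to be the first paragraph: producing, from the two applications of Theorem~\ref{tworays}, three \emph{distinct} $(-1)$-curves on $S'$ one of which meets the other two. This is where the precise intersection numbers of case $(ii)$ and the facial structure of $\NE(X)$ (Lemma~\ref{fabri}) are really used. Once these curves are available, excluding the cases where $K_{S'}+L$ is big or gives a conic bundle is routine, and the final identification of $S$ is identical to the corresponding portion of Theorem~\ref{tworays}.
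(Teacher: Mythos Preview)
Your proof is correct and follows essentially the same strategy as the paper's: apply Th.~\ref{tworays} twice, reduce to case $(ii)$, produce three distinct $(-1)$-curves $C',\Gamma_2',\Gamma_3'$ in $S'$ with $C'$ meeting both $\Gamma_2'$ and $\Gamma_3'$, exclude the possibility that $K_{S'}+L$ gives a map to a curve, and conclude $K_{S'}+L\equiv 0$.

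A few remarks on the comparison. Your first paragraph is actually more careful than the paper at one point: to ensure that the \emph{same} component $C'=C_1'$ meets both a component of $f(E_{R_2})_{|S}$ and a component of $f(E_{R_3})_{|S}$, you use the symmetric half of case $(ii)$, namely $E_{R_2}\cdot C_{R_1}=E_{R_3}\cdot C_{R_1}=1$, to see that \emph{every} $C^{(1)}_j$ meets each of the two divisors; the paper's proof asserts $C_1'\cdot\Gamma_3'>0$ without making this step explicit. For the distinctness $\Gamma_2\neq\Gamma_3$, your argument via $\ker f_*$ works, though you don't really need Lemma~\ref{fabri}: extremality of $R_2$ alone already forces the relation $[C_{R_2}]=[C_{R_3}]+\lambda[C_{\NE(f)}]$ to have $\lambda=0$ (and then $R_2=R_3$). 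Equivalently, $f_*(R_2)\neq f_*(R_3)$ because adjacent rays of $\NE(X)$ biject with extremal rays of $\NE(Y)$.

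For excluding $\dim W=1$, the paper simply quotes from the proof of Th.~\ref{tworays} that the fibre through $C_1'$ is exactly $C_1'+\Gamma_2'$, so $\Gamma_3'$ cannot fit; your degree count $-K_{S'}\cdot F\geq 3>2$ is an equally valid variant. Your separate treatment of $\dim W=2$ via the Hodge index theorem is not needed, since Th.~\ref{tworays} already rules out the birational case, but it does no harm.
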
  
\begin{proof}
  We apply Proposition \ref{tworays} to $f,R_1,R_2$ and to $f,R_1,R_3$; we show that we are in case $(i)$, which 
  yields the statement by Proposition \ref{resolution}.

  By contradiction, suppose that we are in case $(ii)$;
   we keep the same notation as in the proof of  Proposition \ref{tworays}.
  Then  $K_{S'}+L$ yields a contraction $g\colon S'\to B$ onto a curve,  $E_{R_2}\cdot R_1>0$, and $E_{R_3}\cdot R_1>0$. 
  
  Let $C_1\subset S$ be an irreducible component of $f(E_{R_1})_{|S}$, and $C_1'\subset S'$ its transform. For $j\in\{2,3\}$ write $f(E_{R_j})_{|S}=\Gamma_{j1}+\cdots+\Gamma_{jr_j}$, and let $\Gamma_{ji}'\subset S'$ be the transform of $\Gamma_{ji}$.

  Using \eqref{formula} as in the proof of Proposition \ref{tworays}, we see that $(\Gamma_{j1}+\cdots+\Gamma_{jr_j})\cdot C_1>0$, hence $\Gamma_{ja_j}\cdot C_1>0$ for some $a_j\in\{1,\dotsc,r_j\}$, and
  $\Gamma_{ja_j}'\cdot C_1'>0$ in $S'$. Then the proof of Proposition \ref{tworays} shows that $C_1'+\Gamma_{2a_2}'$ and $C_1'+\Gamma_{3a_3}'$ are both fibers of $g$, so they should coincide, but $\Gamma_{2a_2}'\neq\Gamma_{3a_3}'$ because $R_2\neq R_3$, and we get a contradiction.
\end{proof}
\begin{corollary}\label{fourrays}
  Let $X$ be a smooth Fano $4$-fold with $\delta_X\leq 2$.  Suppose that $X$ has four distinct extremal rays
  $R_0,R_1,R_2,R_3$ of type $(3,2)$ such that: 
  $$E_{R_0}\cdot R_i=0\  \text{for } i=1,2,3, \text{ and } E_{R_1}\cdot R_j>0\ \text{for } j=2,3.$$
  Then one of the following holds:
  \begin{enumerate}[$(i)$]
    \item $\dim\N(E_{R_i},X)\leq 3$ for some $i\in\{0,1,2,3\}$, in particular $\rho_X\leq 5$;
    \item  $\dim\N(E_{R_0},X)\leq 10$, in particular $\rho_X\leq 12$.

      \noindent Moreover if $f\colon X\to Y$ is the contraction of $R_0$ and $S:=f(E_{R_0})$, then 
 $S$ is a smooth del Pezzo surface and $-K_S=(-K_Y)_{|S}$. 
    \end{enumerate}
\end{corollary}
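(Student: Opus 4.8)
The plan is to distinguish two cases according to whether every $E_{R_i}$ satisfies $\dim\N(E_{R_i},X)\ge 4$. If $\dim\N(E_{R_i},X)\le 3$ for some $i\in\{0,1,2,3\}$, then, since $E_{R_i}$ is a prime divisor and $\delta_X\le 2$, we get $\rho_X=\dim\N(E_{R_i},X)+\codim\N(E_{R_i},X)\le 3+\delta_X\le 5$, which is case $(i)$. So from now on I assume $\dim\N(E_{R_i},X)\ge 4$ for every $i$, let $f\colon X\to Y$ be the contraction of $R_0$, and set $S:=f(E_{R_0})$; the aim is to show that $S$ is a smooth del Pezzo surface with $-K_S=(-K_Y)_{|S}$, which will give case $(ii)$.

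The idea is to bring $f,R_1,R_2,R_3$ into the hypotheses of Proposition \ref{threerays}: of these, $E_{R_0}\cdot R_i=0$ for $i=1,2,3$ and $E_{R_1}\cdot R_j>0$ for $j=2,3$ are given, and $\dim\N(E_{R_0},X)\ge 4$ holds by assumption, so the only point to check is
\[
(\star)\qquad E_{R_0}\cap E_{R_i}\neq\emptyset\quad\text{for }i=1,2,3.
\]
Granting $(\star)$, Proposition \ref{threerays} yields that $S$ is a smooth del Pezzo surface with $-K_S=(-K_Y)_{|S}$. Then $\dim\N(S,Y)\le\rho_S\le 9$; since $f_*\colon\N(X)\to\N(Y)$ maps $\N(E_{R_0},X)$ onto $\N(S,Y)$ with kernel $\R[C_{R_0}]\subset\N(E_{R_0},X)$, we get $\dim\N(E_{R_0},X)=\dim\N(S,Y)+1\le 10$, hence $\rho_X=\dim\N(E_{R_0},X)+\codim\N(E_{R_0},X)\le 10+\delta_X\le 12$. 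Thus everything reduces to $(\star)$.

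To establish $(\star)$ I would argue by contradiction, assuming $E_{R_0}\cap E_{R_i}=\emptyset$ for some $i$. By Lemma \ref{fabri}, $R_0+R_i$ is then a $2$-dimensional face of $\NE(X)$ whose birational contraction has exceptional locus the disjoint union $E_{R_0}\sqcup E_{R_i}$. Next, since $E_{R_1}\cdot R_j>0$ forces $[C_{R_1}+C_{R_j}]\in\mov(X)$ for $j=2,3$ (the computation in the proof of Lemma \ref{chitarra}, which uses $\dim\N(E_{R_1},X),\dim\N(E_{R_j},X)\ge 4$), and since $E_{R_0}\cdot(C_{R_1}+C_{R_j})=0$, the prime divisor $E_{R_0}$ is covered by irreducible curves in the class $[C_{R_1}+C_{R_j}]$; intersecting such a curve with $E_{R_1}$ and with $E_{R_j}$ (using also $E_{R_j}\cdot R_1>0$, again from Lemma \ref{fabri}) pins down the relevant intersection numbers $E_{R_1}\cdot R_j$, $E_{R_j}\cdot R_1$ to be $1$ whenever the corresponding exceptional divisors are disjoint from $E_{R_0}$. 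In the remaining tight configuration I expect to pass to $Y$ via Corollary \ref{torino}, where the rays $R_i':=f_*(R_i)$ are still of type $(3,2)$ with the same incidences but with $E_{R_i'}$ disjoint from $S$, and to derive a contradiction from $\delta_X\le 2$ together with Lemma \ref{chitarra} applied in $Y$.

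The main obstacle is precisely $(\star)$: the reduction to Proposition \ref{threerays} and the final numerical estimate are routine, whereas excluding the degenerate situation in which $E_{R_0}$ is disjoint from one of the $E_{R_i}$ — while $E_{R_0}\cdot R_i=0$ and $E_{R_1}\cdot R_j>0$ still hold — is the delicate step, and it is where the hypothesis $\delta_X\le 2$ is genuinely used.
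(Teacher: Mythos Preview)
Your overall architecture is the same as the paper's: split off case $(i)$ by the definition of $\delta_X$, then reduce $(ii)$ to Proposition~\ref{threerays} via the non-emptiness condition $(\star)$, and finish with the numerical estimate $\dim\N(E_{R_0},X)=\dim\N(S,Y)+1\le 10$. That part is fine.

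The gap is entirely in your argument for $(\star)$. The step ``since $[C_{R_1}+C_{R_j}]\in\mov(X)$ and $E_{R_0}\cdot(C_{R_1}+C_{R_j})=0$, the prime divisor $E_{R_0}$ is covered by irreducible curves in the class $[C_{R_1}+C_{R_j}]$'' is not justified: membership in $\mov(X)=\Eff(X)^{\vee}$ is a convex-geometric condition and does not produce a covering family of \emph{irreducible} curves in that numerical class. Without such a family you cannot conclude that these curves sit inside $E_{R_0}$, and the subsequent ``pinning down'' of the intersection numbers collapses. The remainder of your sketch (``in the remaining tight configuration I expect to pass to $Y$\dots'') is not a proof.

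The paper handles $(\star)$ by a short linear-algebra argument that uses $\delta_X\le 2$ directly. If $E_{R_0}\cap E_{R_i}=\emptyset$ then $\N(E_{R_0},X)\subset (E_{R_i})^{\perp}$. The classes $[E_{R_1}],[E_{R_2}],[E_{R_3}]$ are linearly independent in $\Nu(X)$ (they generate distinct one-dimensional faces of $\Eff(X)$), so $(E_{R_1})^{\perp}\cap(E_{R_2})^{\perp}\cap(E_{R_3})^{\perp}$ has codimension $3$; since $\codim\N(E_{R_0},X)\le\delta_X\le 2$, we get $E_{R_0}\cap E_{R_h}\neq\emptyset$ for at least one $h$. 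Because $E_{R_0}\cdot R_h=0$, some irreducible curve $C\subset E_{R_0}$ has $[C]\in R_h$; now the positivity hypotheses $E_{R_1}\cdot R_j>0$ (and $E_{R_j}\cdot R_1>0$ from Lemma~\ref{fabri}) propagate this to give $E_{R_0}\cap E_{R_i}\neq\emptyset$ for all $i=1,2,3$. This replaces your attempted contradiction entirely and is where $\delta_X\le 2$ is actually used.
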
  
\begin{proof}
We assume that  $\dim\N(E_{R_i},X)\geq 4$ for every $i=0,1,2,3$, and prove $(ii)$.

We show that 
$E_{R_0}\cap E_{R_i}\neq\emptyset$ for every $i=1,2,3$.
If  $E_{R_0}\cap E_{R_i}=\emptyset$ for some $i\in\{1,2,3\}$, then  for every curve $C\subset E_{R_0}$ we have $E_{R_i}\cdot C=0$, so that $[C]\in(E_{R_i})^{\perp}$, and $\N(E_{R_0},X)\subset(E_{R_i})^{\perp}$.
 
Since the classes $[E_{R_1}],[E_{R_2}],[E_{R_3}]\in\Nu(X)$ generate distinct one dimensional faces of $\Eff(X)$ (see \cite[Remark 2.19]{eff}), they are linearly independent, hence in $\N(X)$ we have
$$\codim \bigl((E_{R_1})^{\perp}\cap (E_{R_2})^{\perp}\cap (E_{R_3})^{\perp}\bigr)=3.$$
On the other hand $\codim\N(E_{R_0},X)\leq \delta_X\leq 2$, thus $\N(E_{R_0},X)$ cannot be contained in the above intersection. Then $\N(E_{R_0},X)\not\subset (E_{R_{h}})^{\perp}$ for some $h\in\{1,2,3\}$, hence $E_{R_0}\cap E_{R_h}\neq\emptyset$. 
In particular, since $E_{R_0}\cdot R_h=0$, there exists an irreducible curve $C\subset E_{R_0}$ with $[C]\in R_{h}$.

For $j=2,3$ we have  $E_{R_1}\cdot R_j>0$, and by Lemma \ref{fabri} also  $E_{R_j}\cdot R_1>0$. This implies that $E_{R_0}\cap E_{R_i}\neq\emptyset$ for every $i=1,2,3$. For instance say $h=3$: then $E_{R_1}\cdot R_3>0$ yields $E_{R_1}\cap C\neq\emptyset$, hence  $E_{R_0}\cap E_{R_1}\neq\emptyset$. Then there exists  an irreducible curve $C'\subset E_{R_0}$ with $[C']\in R_{1}$, and $E_{R_2}\cdot R_1>0$ yields 
 $E_{R_0}\cap E_{R_2}\neq\emptyset$.

Finally we apply  Proposition \ref{threerays} to get that
$S$ is a smooth del Pezzo surface and $-K_S=(-K_Y)_{|S}$. Therefore $\dim\N(S,Y)\leq\rho_S\leq 9$ and $\dim\N(E_{R_0},X)=\dim\N(S,X)+1\leq 10$, so we get $(ii)$.
\end{proof}
\section{Proof of Theorem \ref{main}}\label{final}
\noindent In this section we show how to apply the results of \S \ref{dp} to bound $\rho_X$; the following is our main result.  
\begin{thm}\label{delpezzo}
  Let $X$ be a smooth Fano $4$-fold  with $\delta_X\leq 2$ and $\rho_X\geq 8$, and with no small elementary contraction.

  Then $\rho_X\leq\delta_X+10\leq 12$.
  Moreover  every elementary contraction $f\colon X\to Y$ is of type $(3,2)$, and $S:=f(\Exc(f))\subset Y$ is a smooth del Pezzo surface with $-K_S=(-K_Y)_{|S}$.
\end{thm}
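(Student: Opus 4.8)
The plan is to reduce to the case where \emph{every} elementary contraction of $X$ is of type $(3,2)$, and then, for each such contraction, to exhibit the four-ray configuration of Corollary~\ref{fourrays}.

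\smallskip\noindent\emph{Reduction.} Since $\rho_X\ge 8>5$, Lemma~\ref{sabri} excludes a divisorial elementary contraction not of type $(3,2)$, and Lemma~\ref{paris} (using $\delta_X\le 2$) excludes a fiber-type elementary contraction, whose target would have Picard number $\le 4$ and so force $\rho_X\le 5$; small elementary contractions are excluded by hypothesis. Hence every extremal ray of $\NE(X)$ is of type $(3,2)$. Moreover $\codim\N(E_R,X)\le\delta_X\le 2$ for every extremal ray $R$, so $\dim\N(E_R,X)\ge\rho_X-2\ge 6\ge 4$ and all results of~\S\ref{dp} apply to every extremal ray. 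In particular (Lemmas~\ref{Fanotarget},~\ref{fabri}), for distinct extremal rays $R,R'$ exactly one of the following holds: $E_R\cdot R'=E_{R'}\cdot R=0$ (\emph{orthogonal}), in which case $R+R'$ spans a $2$-dimensional face of $\NE(X)$; or $E_R\cdot R'>0$ and $E_{R'}\cdot R>0$ (\emph{linked}). The nef-divisor construction in the proof of Lemma~\ref{fabri} extends to show that \emph{any} pairwise-orthogonal set of extremal rays spans a face of $\NE(X)$, of dimension equal to the size of the set (each ray contributes a distinct contracted divisor, and a birational contraction contracts at most $\rho_X-\rho_W$ of them); conversely, a $2$-dimensional face spanned by two extremal rays is birational by Lemma~\ref{paris} (its target has Picard number $\rho_X-2\ge 6$, too large for a fiber type), hence its rays are orthogonal by Lemma~\ref{chitarra}. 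Therefore orthogonality is precisely the edge relation of the polytope obtained by intersecting $\NE(X)$ with a general affine hyperplane, and, since a $d$-polytope has minimum vertex-degree $d$, every extremal ray is orthogonal to at least $\rho_X-1\ge 7$ others.

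\smallskip\noindent\emph{Producing the configuration.} Fix an extremal ray $R_0$, with $f\colon X\to Y$ its contraction and $S:=f(E_{R_0})$, and let $\mathcal F_0$ be the set of the $\ge\rho_X-1\ge 7$ extremal rays orthogonal to $R_0$. I want $R_1,R_2,R_3\in\mathcal F_0$ with $R_1$ linked to both $R_2$ and $R_3$; then $R_0,R_1,R_2,R_3$ are automatically distinct and satisfy the hypotheses of Corollary~\ref{fourrays}. Suppose no such triple exists. Then the linking relation on $\mathcal F_0$ has every vertex of degree $\le 1$, i.e. is a matching, so $\mathcal F_0$ contains a pairwise-orthogonal subset of size $\ge\lceil|\mathcal F_0|/2\rceil\ge 4$; adjoining $R_0$ yields a pairwise-orthogonal set $\Sigma$ of size $\ge 5$, which spans a face of $\NE(X)$ of dimension $|\Sigma|$. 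Its contraction $\pi\colon X\to W$ is birational (no nonzero movable class lies in $\NE(\pi)$, since every such class has negative intersection with some $E_R$, $R\in\Sigma$), onto a Fano variety $W$ with terminal locally factorial singularities, $\rho_W=\rho_X-|\Sigma|\le\rho_X-5$, contracting the $|\Sigma|\ge 5$ distinct prime divisors $E_R$ ($R\in\Sigma$). This over-contracted situation must be ruled out: here one invokes $\delta_X\le 2$ (each $E_R$ satisfies $\codim\N(E_R,X)\le 2$, so the $E_R$ cannot be ``too independent''), the fact that $\Eff(X)=\mov(X)^\vee$ is pointed for the Fano $X$ (so $\mov(X)$ is full-dimensional and there exist linked rays), and the author's earlier structural results, to contradict $|\Sigma|\ge 5$. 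This is the step I expect to require the most work.

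\smallskip\noindent\emph{Conclusion.} Once $R_0,R_1,R_2,R_3$ are in hand, Corollary~\ref{fourrays} applies; its case~$(i)$ would give $\rho_X\le 5$, contradicting $\rho_X\ge 8$, so case~$(ii)$ holds: $\dim\N(E_{R_0},X)\le 10$, and $S$ is a smooth del Pezzo surface with $-K_S=(-K_Y)_{|S}$. Finally $\rho_X=\dim\N(E_{R_0},X)+\codim\N(E_{R_0},X)\le 10+\delta_X\le 12$, i.e. $\rho_X\le\delta_X+10\le 12$. Since $R_0$ ranged over all extremal rays, the conclusion holds for every elementary contraction, proving the theorem. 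The main obstacle, as noted, is excluding the degenerate configurations in which a large family of exceptional divisors is mutually orthogonal.
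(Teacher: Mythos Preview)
Your reduction step is correct and essentially the same as the paper's: every elementary contraction is of type $(3,2)$, and for any two extremal rays $R,R'$ with $R+R'$ a face of $\NE(X)$, the contraction of this face is birational (Lemma~\ref{paris}) and hence $E_R\cdot R'=E_{R'}\cdot R=0$ (Lemma~\ref{chitarra}). Your observation that orthogonality coincides with adjacency in $\NE(X)$ is also correct and matches the paper.

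The genuine gap is in your ``Producing the configuration'' step, and you have identified it yourself. Your matching argument shows that if no $R_1\in\mathcal F_0$ is linked to two others, then $\mathcal F_0\cup\{R_0\}$ contains a large pairwise-orthogonal set. But you then have no mechanism to derive a contradiction from this: having five or more mutually orthogonal extremal rays of type $(3,2)$, spanning a birational face, is not obviously impossible, and none of the listed ingredients ($\delta_X\le 2$, fullness of $\mov(X)$, earlier structural results) gives you a handle on it. The approach stalls.

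The paper bypasses this obstacle entirely by a different, direct argument. After the reduction, it passes to $Y$ via the fixed contraction $f$: the extremal rays of $\NE(Y)$ are exactly $f_*(\mathcal F_0)$, each is again of type $(3,2)$ (Cor.~\ref{torino}), and adjacent rays of $\NE(Y)$ are again orthogonal. Now fix any extremal ray $R_1$ of $Y$. Since $E_{R_1}$ is effective, some extremal ray $R_2$ has $E_{R_1}\cdot R_2>0$. Suppose this $R_2$ is unique; then every other extremal ray lies in $(E_{R_1})^\perp$. For each such $R\neq R_1,R_2$, pick a facet $\tau$ of $\NE(Y)$ containing $R$ but not $R_1$; since $\pm E_{R_1}$ are not nef, $\R\tau\neq(E_{R_1})^\perp$, so by Lemma~\ref{cone} the rays adjacent to $R$ in $\tau$ are not all in $(E_{R_1})^\perp$, forcing $R_2$ adjacent to $R$ and hence $R\subset(E_{R_2})^\perp$. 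Thus every $R\neq R_1,R_2$ lies in $(E_{R_1})^\perp\cap(E_{R_2})^\perp$; but the rays adjacent to $R_1$ already span a hyperplane (Lemma~\ref{cone} again), so $(E_{R_1})^\perp=(E_{R_2})^\perp$, contradicting that $[E_{R_1}],[E_{R_2}]$ generate distinct rays of $\Eff(Y)$. This yields $R_2,R_3$ with $E_{R_1}\cdot R_j>0$, and lifting back to $X$ gives the four-ray configuration for Cor.~\ref{fourrays}.

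In short: rather than trying to rule out large orthogonal families, the paper uses the elementary convex-geometric Lemma~\ref{cone} (linear span of adjacent rays) together with the fact that $[E_{R_1}]$ and $[E_{R_2}]$ are not proportional to force the existence of a second linked ray directly.
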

In the proof we will use the following terminology: if $R_1$, $R_2$ are distinct one-dimensional faces of a convex polyhedral cone $\ma{C}$, we say that $R_1$ and $R_2$ are \emph{adjacent} if $R_1+R_2$ is a face of $\ma{C}$.
A  \emph{facet} of $\ma{C}$ is a face of codimension one.
We will also need the following elementary fact.
\begin{lemma}\label{cone}
  Let $\ma{C}$ be a convex polyhedral cone not containing non-zero linear subspaces, and $R_0$ a one-dimensional face of $\ma{C}$. Let $R_1,\dotsc,R_m$ be the one-dimensional faces of $\ma{C}$ that are adjacent to $R_0$.

  Then the linear span of $R_0,R_1,\dotsc,R_m$ is $\R\ma{C}$.
\end{lemma}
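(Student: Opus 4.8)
The plan is to reduce the statement to a claim about the faces of $\ma{C}$ containing $R_0$ and to prove that claim by induction on dimension. After replacing the ambient space by $\R\ma{C}$ I may assume that $\ma{C}$ is full-dimensional; since $\ma{C}$ contains no non-zero linear subspace it is a pointed cone with apex at the origin, and I will freely use the standard facts that every face of a face of $\ma{C}$ is again a face of $\ma{C}$, and that $R_0+R$ is a $2$-dimensional face of a face $F\supseteq R_0$ precisely when it is a $2$-dimensional face of $\ma{C}$. In particular, the edges (one-dimensional faces) of $\ma{C}$ that are adjacent to $R_0$ and contained in a face $F$ are exactly the edges of $F$ adjacent to $R_0$, so adjacency is inherited by subfaces.

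The claim I would prove by induction on $d=\dim F$ is: for every face $F$ of $\ma{C}$ with $R_0\subseteq F$, the ray $R_0$ together with the edges of $\ma{C}$ adjacent to $R_0$ and contained in $F$ spans $\R F$. For $d=1$ we have $F=R_0$ and there is nothing to prove; for $d=2$ we have $F=R_0+R$ with $R$ an edge adjacent to $R_0$, and $R_0,R$ already span $\R F$. Applying the claim to $F=\ma{C}$, which is a face of itself, then yields exactly the assertion of the lemma, since the edges of $\ma{C}$ adjacent to $R_0$ are precisely $R_1,\dotsc,R_m$.

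For the inductive step, suppose $\dim F=d\geq 3$. Then $R_0$ is a proper face of $F$ of codimension $d-1\geq 2$, hence it equals the intersection of the facets of $F$ that contain it. If only one such facet $G$ existed, this intersection would be $G$, of dimension $d-1\geq 2$, contradicting $\dim R_0=1$; so there are at least two distinct facets $G_1\neq G_2$ of $F$, each of dimension $d-1\geq 2$ and each containing $R_0$. By the inductive hypothesis, for $j=1,2$ the ray $R_0$ together with the edges adjacent to $R_0$ inside $G_j$ spans $\R G_j$. Now $\R G_1$ and $\R G_2$ are hyperplanes in $\R F$; since a facet of $F$ is cut out by a unique supporting hyperplane, which is its own linear span, distinct facets have distinct spans, so $\R G_1+\R G_2=\R F$. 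Therefore $R_0$ together with the edges adjacent to $R_0$ contained in $G_1\cup G_2\subseteq F$ already spans $\R F$, completing the induction.

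The steps carrying the real content, and hence the main obstacle, are the two polyhedral-combinatorics inputs invoked in the inductive step: that a proper face of a pointed polyhedral cone is the intersection of the facets containing it (forcing a one-dimensional face in dimension $\geq 3$ to lie in at least two facets), and that distinct facets have distinct linear spans. Both are classical, but if one prefers to avoid quoting them for cones directly, I would first pass to a compact cross-section $P:=\ma{C}\cap\{\varphi=1\}$, where $\varphi$ is a linear functional strictly positive on $\ma{C}\smallsetminus\{0\}$, which exists precisely because $\ma{C}$ is pointed. The face lattice of $\ma{C}$ with the origin removed is then identified with that of the polytope $P$, edges of $\ma{C}$ correspond to vertices of $P$ and $2$-dimensional faces to edges of $P$, so that the two facts reduce to the standard statements that a face of a polytope is the intersection of the facets containing it and that distinct facets lie in distinct affine hyperplanes.
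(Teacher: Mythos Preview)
Your argument is correct. Note, however, that the paper does not give its own proof of this lemma: it is cited from \cite{ewald2} (Lemma~II.2.6) and invoked as a black box in the proof of Th.~\ref{delpezzo}. There is therefore nothing in the paper to compare your proof against. Your inductive argument---reducing to the claim that $R_0$ together with its adjacent edges inside any face $F\supseteq R_0$ spans $\R F$, and then using that $R_0$, having codimension at least~$2$ in $F$, lies in at least two distinct facets of $F$ whose spans together fill $\R F$---is the natural route and relies only on standard facts of polyhedral combinatorics (a proper face is the intersection of the facets containing it; distinct facets have distinct linear spans), both of which you justify adequately.
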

\begin{proof}
  We can assume that $\ma{C}\subset\R^n$ with $n=\dim\ma{C}$. Since $\ma{C}$ does not contain non-zero linear subspaces, there exists
an affine hyperplane 
  $H\subset\R^n$ such that $P:=H\cap\ma{C}$ is an $(n-1)$-dimensional convex polytope, and $\ma{C}$ is the cone over $P$. Then $v_i:=R_i\cap H$ is a vertex of $P$ for $i=0,1,\dotsc,m$, and $v_1,\dotsc,v_m$
are the vertices of $P$
that are adjacent to $v_0$. The claim is that the affine span of $v_0,v_1,\dotsc,v_m$ is $H$.

Up to translation we can assume that $v_0=0$ in $H=\R^{n-1}$. Let $\ma{D}\subset H$ be the convex cone generated by $v_1,\dotsc,v_m$, with vertex $v_0$. Since $P$ is convex, we have $P\subset\ma{D}$, and $\dim\ma{D}=\dim P=n-1$. Thus the affine span of $v_0,v_1,\dotsc,v_m$  has dimension $n-1$, and coincides with $H$.
\end{proof}  
\begin{proof}[Proof of  Theorem \ref{delpezzo}]
   Let $f\colon X\to Y$ be an elementary contraction; note that $\rho_Y=\rho_X-1\geq 7$.
    Then $f$ is not of fiber type by Lemma \ref{paris}, and not small by assumption, so that $f$ is
    divisorial. Moreover $f$ is of type $(3,2)$ by Lemma \ref{sabri}.

Set $E:=\Exc(f)$ and $S:=f(E)\subset Y$; we have $\dim\N(E,X)\geq\rho_X-\delta_X\geq 6$, and  if $R'\neq \NE(f)$ is another extremal ray of $X$, we have $E\cdot R'\geq 0$ by Lemma \ref{Fanotarget}.
Moreover,
    if $R'$ is  adjacent to $\NE(f)$, then $E\cdot R'=0$. Indeed the contraction $g\colon X\to Z$ of the face $R'+\NE(f)$ cannot be of fiber type by Lemma \ref{paris}, thus it is birational and we apply Lemma \ref{chitarra}.

We are going to show that there exists three extremal rays $R_1',R_2',R'_3$ adjacent to $\NE(f)$ such that $E_{R_1'}\cdot R_j'>0$ for $j=2,3$, and then apply Corollary \ref{fourrays}.
    
\medskip
  
Let us consider the cone $\NE(Y)$. It is a convex polyhedral cone whose extremal rays $R$ are in bijection with the extremal rays $R'$ of $X$ adjacent to $\NE(f)$, via $R=f_*(R')$, see \cite[\S 2.5]{fanos}.

By Corollary \ref{torino}, $R$ is still of type $(3,2)$, and  $f^*(E_{R})=E_{R'}$. Thus for every pair $R_1,R_2$ of distinct extremal rays of $Y$, with $R_i=f_*(R_i')$ for $i=1,2$, we have $E_{R_1}\cdot R_2=E_{R'_1}\cdot R'_2\geq 0$.

If $R_1$ and $R_2$ are adjacent, we show that $E_{R_1}\cdot R_2=E_{R_2}\cdot R_1=0$. Indeed consider the contraction $Y\to Z$ of the face $R_1+R_2$ and the composition $g\colon X\to Z$, which contracts $R_1'$ and $R_2'$. Again $g$ cannot be of fiber type by Lemma \ref{paris}, thus it is birational and we apply Lemma \ref{chitarra} to get $E_{R_1'}\cdot R_2'=E_{R_2'}\cdot R_1'=0$, thus
$E_{R_1}\cdot R_2=E_{R_2}\cdot R_1=0$.

\medskip
  
Fix an extremal ray $R_1$ of $Y$.
We show that there exist two distinct extremal rays $R_2,R_3$ of $Y$ with $E_{R_1}\cdot R_j>0$ for $j=2,3$.

Indeed since $E_{R_1}$ is an effective divisor, there exists some curve $C\subset Y$ with $E_{R_1}\cdot C>0$, hence there exists some extremal ray $R_2$ with $E_{R_1}\cdot R_2>0$.

By contradiction, let us assume that $E_{R_1}\cdot R=0$ for every extremal ray $R$ of $Y$ different from $R_1,R_2$. This means that the cone $\NE(Y)$ has the extremal ray $R_1$ in the halfspace $\N(Y)_{E_{R_1}<0}$, the extremal ray $R_2$ in the halfspace $\N(Y)_{E_{R_1}>0}$, and all other extremal rays in the
 hyperplane $(E_{R_1})^{\perp}$.

Fix $R\neq R_1,R_2$, and let $\tau$ be a facet of $\NE(Y)$ containing $R$ and not $R_1$. Note that $\R\tau\neq (E_{R_1})^{\perp}$, as $E_{R_1}$ and $-E_{R_1}$ are not nef.
By Lemma \ref{cone}
the rays adjacent to $R$ in $\tau$ cannot be all contained in $(E_{R_1})^{\perp}$. We conclude that $R_2$ is adjacent to $R$, therefore $E_{R_2}\cdot R=0$, namely $R\subset (E_{R_2})^{\perp}$.

Summing up, we have shown that every extremal ray $R\neq R_1,R_2$ of $Y$ is contained in both $(E_{R_1})^{\perp}$ and $(E_{R_2})^{\perp}$. On the other hand these rays include all the rays adjacent to $R_1$, so by Lemma \ref{cone} their linear span must be at least a hyperplane. Therefore
$(E_{R_1})^{\perp}=(E_{R_2})^{\perp}$ and the classes $[E_{R_1}],[E_{R_2}]\in\Nu(Y)$ are proportional, which is impossible, because they generate distinct one dimensional faces of the cone $\Eff(Y)$ (see \cite[Remark 2.19]{eff}).

We conclude that there exist  two distinct extremal rays $R_2,R_3$ of $Y$ with $E_{R_1}\cdot R_j>0$ for $j=2,3$.

For $i=1,2,3$ we have $R_i=f_*(R'_i)$ where $R'_i$ is an extremal ray of $X$ adjacent to $\NE(f)$, so that $E\cdot R_i'=0$. Moreover for $j=2,3$ we have
$E_{R'_1}\cdot R'_j=E_{R_1}\cdot R_j>0$.

We apply  Corollary \ref{fourrays} to $\NE(f),R_1',R_2',R_3'$. We have already excluded $(i)$, and $(ii)$ yields the statement.
\end{proof}
We can finally prove the following more detailed version of Theorem \ref{main}.
\begin{thm}\label{last}
  Let $X$ be a smooth Fano $4$-fold which is not a product of surfaces.
  
  Then $\rho_X\leq 12$,
 and if $\rho_X=12$, then there exist $X\stackrel{\ph}{\dasharrow} X'\stackrel{g}{\to} Z$
where $\ph$ is a finite sequence of flips,  $X'$ is smooth, $g$ is a contraction, and $\dim Z=3$.
\end{thm}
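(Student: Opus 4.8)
The plan is to combine the structural results already established with the earlier work on small contractions and on the Lefschetz defect, organizing the argument by the value of $\delta_X$. First I would recall that if $\delta_X\geq 4$ then $X$ is a product of del Pezzo surfaces by Th.~\ref{codim}, contrary to hypothesis, and if $\delta_X=3$ then by Th.~\ref{delta3} either $X$ is a product of surfaces (excluded) or $\rho_X\leq 6$; in both cases we are done, so we may assume $\delta_X\leq 2$. Next, if $X$ has a small elementary contraction, then $\rho_X\leq 12$ by Th.~\ref{small}, and moreover the construction in \cite{small} producing such a contraction is exactly what yields the sequence of flips $\ph\colon X\dashrightarrow X'$ with $X'$ smooth together with a contraction $g\colon X'\to Z$; I would quote from \cite{small} both the bound and the fact that when $\rho_X=12$ one obtains such $X'$, $g$ with $\dim Z=3$. (If $\rho_X\leq 7$ the statement $\rho_X\le 12$ is trivial and the second clause is vacuous, so one only needs the detailed output of \cite{small} in the borderline case.)

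The remaining case is $\delta_X\leq 2$ with no small elementary contraction. If $\rho_X\leq 7$ there is nothing to prove. If $\rho_X\geq 8$, then Th.~\ref{delpezzo} applies: every elementary contraction $f\colon X\to Y$ is of type $(3,2)$, the surface $S=f(\Exc(f))$ is a smooth del Pezzo surface, and $\rho_X\leq\delta_X+10\leq 12$. This already gives $\rho_X\leq 12$; it remains to address the case $\rho_X=12$ and produce the flips-then-fibration chain. Here, since there is no small contraction, I would argue that $\rho_X=12$ forces $\delta_X=2$ (as $\rho_X\le\delta_X+10$), and then I would examine an elementary contraction $f\colon X\to Y$ of type $(3,2)$: by Th.~\ref{delpezzo}, $S$ is a smooth del Pezzo surface with $\dim\N(S,Y)\leq\rho_S\leq 9$, and $\dim\N(E,X)=\dim\N(S,Y)+1\leq 10$, while $\codim\N(E,X)\le\delta_X=2$, so in fact $\dim\N(E,X)=10$ and $\rho_S=9$, i.e. $S\cong\Bl_4\pr^2$ is the "maximal" del Pezzo. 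At this point the goal is to show $X$ admits the required birational modification; the natural route is that the numerical constraints (equality everywhere) rigidify the geometry enough to run an MMP on $Y$, or on $X$ after contracting $E$, landing on a variety with a fiber-type contraction of relative dimension one over a $3$-fold; the smoothness of $X'$ and the value $\dim Z=3$ should come from tracking the singularities through Th.~\ref{singularities} and the fact that the del Pezzo $S$ being of Picard number $9$ pins down $f$ and its target.

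The main obstacle I expect is precisely this last step: extracting, in the extremal case $\rho_X=12$, the explicit chain $X\stackrel{\ph}{\dashrightarrow}X'\stackrel{g}{\to}Z$ with $\dim Z=3$ and $X'$ smooth. Proving $\rho_X\leq 12$ is by now essentially assembled from Theorems \ref{codim}, \ref{delta3}, \ref{small}, and \ref{delpezzo}; but the structural statement about $\rho_X=12$ likely requires either re-deriving it from \cite{small} (if the case $\rho_X=12$ necessarily produces a small contraction somewhere after flips), or a separate analysis showing that a Fano $4$-fold with $\delta_X=2$, $\rho_X=12$, all elementary contractions of type $(3,2)$, and $S\cong\Bl_4\pr^2$ can be flipped to a smooth $X'$ fibering over a $3$-fold. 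I would first check whether the hypotheses of \ref{delpezzo} in the case $\rho_X=12$ are in fact incompatible with "no small elementary contraction on every small $\Q$-factorial modification" — if so, the chain comes for free from \cite{small}; otherwise a genuinely new argument, tracing the image surfaces and $2$-dimensional fibers under a carefully chosen sequence of elementary contractions and flips, is needed, and that is the delicate point.
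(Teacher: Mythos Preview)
Your assembly of the bound $\rho_X\leq 12$ is correct and matches the paper exactly: case-split on $\delta_X$, dispose of $\delta_X\geq 4$ via Th.~\ref{codim}, of $\delta_X=3$ via Th.~\ref{delta3}, and of $\delta_X\leq 2$ via Th.~\ref{small} (small contraction present) together with Th.~\ref{delpezzo} (no small contraction, $\rho_X\geq 8$).

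Where you diverge from the paper is the treatment of the structural clause when $\rho_X=12$. The paper does \emph{not} split into the two subcases ``small contraction present'' versus ``no small contraction''; it simply observes that once $\rho_X=12$ we are automatically in the regime $\delta_X\leq 2$ (since $\delta_X=3$ forces $\rho_X\leq 6$), and then cites \cite[Theorems~2.7 and~9.1]{small} directly. Those two results from \cite{small} are structural theorems that do not require $X$ itself to have a small elementary contraction --- they apply under general hypotheses (essentially $\delta_X\leq 2$ and $\rho_X$ large) and already produce the chain $X\dashrightarrow X'\to Z$ with $X'$ smooth and $\dim Z=3$. So the possibility you flag at the end --- that the chain ``comes for free from \cite{small}'' --- is exactly what happens, and no new analysis is needed.

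Consequently, your entire discussion of the ``no small contraction, $\rho_X=12$'' subcase (forcing $\rho_S=9$, etc.) is unnecessary. Incidentally, it also contains a slip: a del Pezzo surface with $\rho_S=9$ is $\Bl_8\pr^2$, not $\Bl_4\pr^2$.
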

\begin{proof}
Since $X$ is not a product of surfaces, we have $\delta_X\leq 3$ by Theorem \ref{codim}. Moreover $\delta_X=3$ yields $\rho_X\leq 6$ by 
Theorem \ref{delta3}, while $\delta_X\leq 2$ yields
 $\rho_X\leq 12$ by Theorems \ref{small} and
\ref{delpezzo}.

If $\rho_X=12$, the  statement follows from 
 \cite[Theorems 2.7 and 9.1]{small}. 
\end{proof}  

\smallskip

\noindent {\bf Acknowledgements.} I thank the referee for  comments that helped to improve the readability of the paper.

\small
\providecommand{\noop}[1]{}
\providecommand{\bysame}{\leavevmode\hbox to3em{\hrulefill}\thinspace}
\providecommand{\MR}{\relax\ifhmode\unskip\space\fi MR }
\providecommand{\MRhref}[2]{%
  \href{http://www.ams.org/mathscinet-getitem?mr=#1}{#2}
}
\providecommand{\href}[2]{#2}


\begin{thebibliography}{CRS22}

\bibitem[AW98]{AW}
M.~Andreatta and J.A. Wi{\'s}niewski, \emph{On contractions of smooth
  varieties}, J.\ Algebraic Geom.\ \textbf{7} (1998), 253--312.

\bibitem[Cas08]{fanos}
C.~Casagrande, \emph{Quasi-elementary contractions of {F}ano manifolds},
  Compos.\ Math.\ \textbf{144} (2008), 1429--1460.

\bibitem[Cas12]{codim}
\bysame, \emph{On the {P}icard number of divisors in {F}ano manifolds},
  Ann.~Sci.~{\'E}c.~Norm.~Sup{\'e}r.\ \textbf{45} (2012), 363--403.

\bibitem[Cas13]{eff}
\bysame, \emph{\noop{aaa}{O}n the birational geometry of {F}ano 4-folds},
  Math.~Ann.\ \textbf{355} (2013), 585--628.

\bibitem[Cas17]{blowup}
\bysame, \emph{{F}ano 4-folds, flips, and blow-ups of points}, J.~Algebra
  \textbf{483} (2017), 362--414.

\bibitem[Cas22]{small}
\bysame, \emph{Fano 4-folds with a small contraction}, Adv.\ Math.\
  \textbf{405} (2022), 1--55, paper no.\ 108492.

\bibitem[Cas23]{rendiconti}
\bysame, \emph{The {L}efschetz defect of {F}ano varieties}, Rend.\ Circ.\ Mat.\
  Palermo (2) \textbf{72} (2023), 3061--3075, special issue on Fano varieties.

\bibitem[CRS22]{delta3}
C.~Casagrande, E.A. Romano, and S.A. Secci, \emph{Fano manifolds with
  {L}efschetz defect 3}, J.\ Math.\ Pures Appl.\ \textbf{163} (2022), 625--653,
  Corrigendum: {$\mathbf{168}$} (2022), 108--109.

\bibitem[Deb01]{debarreUT}
O.~Debarre, \emph{Higher-dimensional algebraic geometry}, Universitext,
  Springer-Verlag, 2001.

\bibitem[Har67]{localcohomology}
R.~Hartshorne, \emph{Local cohomology. {A} seminar given by {A.}\
  {G}roethendieck, {H}arvard {U}niversity. {F}all, 1961}, Lecture Notes in
  Math., vol.~41, Springer-Verlag, 1967.

\bibitem[Har80]{hartrefl}
\bysame, \emph{Stable reflexive sheaves}, Math.\ Ann.\ \textbf{254} (1980),
  121--176.

\bibitem[Har07]{hart2007}
\bysame, \emph{Generalized divisors and biliaison}, Illinois J.\ Math.\
  \textbf{51} (2007), 83--98.

\bibitem[KM98]{kollarmori}
J.~Koll{\'a}r and S.~Mori, \emph{Birational geometry of algebraic varieties},
  Cambridge Tracts in Mathematics, vol. 134, Cambridge University Press, 1998.

\bibitem[Mat02]{matsuki}
K.~Matsuki, \emph{Introduction to the {M}ori program}, Universitext,
  Springer-Verlag, 2002.

\bibitem[MM86]{morimukai2}
S.~Mori and S.~Mukai, \emph{Classification of {F}ano {$3$}-folds with $b_2\geq
  2$, {I}}, Algebraic and Topological Theories -- to the memory of Dr.\
  Takehiko Miyata (Kinosaki, 1984), Kinokuniya, Tokyo, 1986, pp.~496--545.

\bibitem[Wi{\'s}91]{wisn}
J.A. Wi{\'s}niewski, \emph{On contractions of extremal rays of {F}ano
  manifolds}, J.~Reine Angew.~Math.\ \textbf{417} (1991), 141--157.

\end{thebibliography}
\end{document}